\documentclass [11pt] {amsart}
\usepackage{amsthm,amsmath,amssymb,amscd,graphics,enumerate}
\usepackage{latexsym}
\usepackage{verbatim}
\usepackage{enumerate}
\usepackage{amsthm}
\usepackage{amsmath}
\usepackage{amscd}

\baselineskip 12pt

\newcounter{theorem}[section]
\numberwithin{equation}{section}

\newtheorem{cl}[theorem]{Claim}

\newtheorem{Thm}[theorem]{Theorem}
{\theoremstyle{remark}
 
\newtheorem{Rem}[theorem]{\text{\textbf{Remark}}} }
\newtheorem{Def}[theorem]{Definition}
\newtheorem{Lem}[theorem]{Lemma}
\newtheorem{Prop}[theorem]{Proposition}

\newtheorem{Con}[theorem]{Conjecture}

\newtheorem{notation}[theorem]{Notation}

{\theoremstyle{definition}
 }

\newcommand{\RR}{{\mathbb{R}}}

\newcommand{\CC}{{\mathbb{C}}}
\newcommand{\QQ}{{\mathbb{Q}}}
\newcommand{\NN}{{\mathbb{N}}}
\newcommand{\PP}{{\mathbb{P}}}

\newcommand{\Aut}{{\operatorname{Aut}}}

\newcommand{\mO}{\mathcal{O}}
\newcommand{\mX}{\mathcal{X}}

\newcommand{\mC}{\mathcal{C}}

\newcommand{\mE}{\mathcal{E}}

\newcommand{\mK}{\mathcal{K}}

\newcommand{\mM}{\mathcal{M}}

\newcommand{\mU}{\mathcal{U}}

\newcommand{\fbar}{\bar{f}}

\newcommand{\Fbar}{\bar{F}}
\newcommand{\Gbar}{\bar{G}}
\title{On   Fano varieties with large pseudo-index}
\author{Jiun-Cheng Chen}
\thanks{Research of J.-C. C supported in part by National Center of Theoretical Science in Hsinchu, Taiwan, National Science Council, Taiwan,  and by the Golden Jade Fellowship of Kenda Foundation.}
\address{National Center for Theoretical Sciences, \\Math. Division \\
Third General Building, National Tsing Hua University\\
No. 101 Sec 2 Kuang Fu Road \\Hsinchu, Taiwan 30043, Taiwan}
\email{jcchen@math.nthu.edu, jcchenster@gmail.com}

\date{\today}

\begin{document}
\begin{abstract}
Let $X$ be a Fano variety with at worst isolated quotient singularities.
 Our result asserts that if $C \cdot (-K_X) > max\{\frac{n}{2}+1,\frac{2n}{3}\}$  for every curve $C \subset X$, then $\rho_X=1$. 
\end{abstract}
\maketitle
\markboth{}{}

\section{Introduction}
We work over the field of complex numbers $\CC$. 
Let $X$ be a smooth Fano variety of dimension $n$. 
The index  $r_X$ is defined as   
             $$r_X = max \{m \in \NN | − K_X = mL \;\text{for some line bundle} \; L\}.$$
The pseudo-index $i_X$ is defined as
        $$i_X = min\{m \in \NN | C \cdot (-K_X)  = m \;\text{for some rational curve}\; C \subset X\}.$$
Clearly, we have $i_X \geq r_X$.  Denote by  $\rho_X$  the Picard number of $X$.
In 1988, Mukai \cite{mu88} proposed the following conjecture:
\begin{Con}[Mukai Conjecture]
 Let $X$ be a smooth Fano variety of dimension n. Then $\rho_X \cdot (r_{X} -1) \leq n$.
 \end{Con}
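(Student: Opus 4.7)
The plan is to attack $\rho_X(r_X - 1) \leq n$ by induction on $\rho_X$, using Mori-theoretic extremal ray analysis and exploiting the fact that the index forces a large length on every extremal ray. The case $r_X = 1$ is vacuous ($0 \leq n$), so I fix an ample line bundle $L$ with $-K_X = r_X L$ and assume $r_X \geq 2$. For the base case $\rho_X = 1$, the Kobayashi--Ochiai bound $r_X \leq n + 1$ immediately gives $\rho_X(r_X - 1) = r_X - 1 \leq n$, with equality characterizing $\PP^n$.

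For the inductive step, the key input is a length bound: since $L$ is Cartier on the smooth variety $X$, for any extremal rational curve $C$ one has $-K_X \cdot C = r_X(L \cdot C) \geq r_X$, so every extremal ray $R \subset \overline{NE}(X)$ satisfies $\ell(R) \geq r_X$. Applying the Ionescu--Wi\'sniewski inequality
\[
\dim F + \dim E(\phi_R) \geq n + \ell(R) - 1 \geq n + r_X - 1
\]
to the contraction $\phi_R \colon X \to Y$ of $R$, with $F$ a nontrivial fiber and $E(\phi_R)$ the exceptional locus, yields strong dimensional constraints on every extremal contraction. I would then split by contraction type: for a fiber-type contraction a general fiber is a smooth Fano of dimension $\geq r_X - 1$ with index $\geq r_X$, which by Kobayashi--Ochiai is forced to be $\PP^{r_X - 1}$ in the borderline case; for a divisorial contraction $\rho_Y = \rho_X - 1$ and one attempts to recurse on $Y$; small contractions, where they occur, must be handled by continuing the MMP.

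The main obstacle is that this induction does not preserve the hypotheses of the conjecture. After a divisorial contraction, $Y$ is typically singular and the integer index degrades to a $\QQ$-index, breaking the length--index relation used above. After a fiber-type contraction, the base $Y$ need not be smooth or Fano, and there is no direct comparison between $r_Y$ and $r_X$ that makes the induction close up; moreover the expected relation linking $\rho_X$ with $\rho_Y$ and the Picard numbers of fibers requires additional hypotheses on the fibration. It is precisely for these reasons that the Mukai conjecture for arbitrary smooth Fano $X$ remains open in general; I would expect genuine progress only by combining this extremal-ray approach with the theory of families of minimal rational curves (VMRT methods in the style of Hwang--Mok, or the deformation-theoretic arguments of Araujo--Druel), or by restricting to regimes where $r_X$ is large enough relative to $n$ that the extremal contractions become rigid.
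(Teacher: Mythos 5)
The statement you were asked about is labeled a \emph{Conjecture} in the paper, not a theorem: it is Mukai's 1988 conjecture, which the paper records as background and does not claim to prove. The paper's actual theorem (Theorem~\ref{mainthm}) is a partial result in a neighbouring direction, proving $\rho_X=1$ for Fano varieties with isolated quotient singularities once the pseudo-index exceeds $\max\{\tfrac{n}{2}+1,\tfrac{2n}{3}\}$; it does not address the full Mukai inequality $\rho_X(r_X-1)\le n$ for arbitrary $\rho_X$. So there is no proof in the paper against which to check yours.

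That said, you have correctly diagnosed why the naive induction fails, and you are honest that your sketch does not close. The base case via Kobayashi--Ochiai, the length bound $\ell(R)\ge r_X$ from $-K_X=r_X L$, and the Ionescu--Wi\'sniewski inequality are all the right ingredients, and your list of obstructions (singular $Y$ after a divisorial contraction, loss of the integer index, no clean relation between $\rho_X$, $\rho_Y$, and $\rho$ of the fibers, the small-contraction case) is exactly where this program breaks down. But an argument that ends by observing that the conjecture ``remains open in general'' is not a proof of the conjecture; it is a survey of the difficulty. If you want to make contact with what this paper actually establishes, the relevant target is the ``Generalized Mukai Conjecture$_k$'' for $k=2$ in the pseudo-index formulation, where the author's strategy is not Mori-theoretic induction on $\rho_X$ but a bend-and-break dimension count on a covering family of twisted stable maps into the covering stack $\mX$, using Lemma~\ref{boundpicardnumber2} to force all curve classes into a single extremal ray and Lemma~\ref{divisorlemma} to rule out both divisorial and fiber-type contractions. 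That is a genuinely different route, and it only reaches the restricted conclusion $\rho_X=1$ under the hypothesis $i_X>\max\{\tfrac{n}{2}+1,\tfrac{2n}{3}\}$, not the full Mukai bound.
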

A  generalized version of this conjecture can be stated as follows: 
\begin{Con}[Generalized Mukai Conjecture]\label{genmukai}
Let $X$ be a smooth Fano variety of
dimension $n$. Then (1) $\rho_X \cdot (i_X- 1) \leq n$, and (2) equality holds if and only if
$X \cong  (\PP^{i_{X}} )^{\rho_X}$.
\end{Con}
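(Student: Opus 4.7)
The plan is to prove both parts of Conjecture \ref{genmukai} simultaneously by induction on $\rho_X$, using iterated rationally connected quotients associated to a minimal dominating family of rational curves. First I would fix a minimal dominating family $\cV$ of rational curves on $X$: by minimality of the pseudo-index, a general member has anti-canonical degree exactly $i_X$. Let $\pi \colon X \dashrightarrow Z$ be the almost-holomorphic $\cV$-rc-quotient, whose general fiber is the equivalence class of a general point under chains of $\cV$-curves. The key numerical input is a Wi\'sniewski-type estimate on the fiber dimension: a general fiber $F$ of $\pi$ satisfies $\dim F \geq i_X - 1$. This follows from the standard deformation-theoretic fact that the locus swept out by members of $\cV$ through a fixed general point has dimension at least $(-K_X \cdot C) - 1 = i_X - 1$, combined with the connectedness of fibers under iterated chains.

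If $\dim Z > 0$, I would iterate the rc-quotient on $Z$: each step accounts for at least $i_X - 1$ new dimensions and reduces Picard number by at least $1$, since the extremal ray supporting $\cV$ is contracted. After $\rho_X$ steps one recovers the inequality $\dim X \geq \rho_X (i_X - 1)$, proving part (1). In the equality case $\rho_X(i_X - 1) = n$, every successive rc-quotient fiber must be exactly $(i_X - 1)$-dimensional, of Picard number $1$, and of pseudo-index at least $i_X$; the Cho--Miyaoka--Shepherd-Barron characterization (or Kobayashi--Ochiai) then identifies each such fiber with $\PP^{i_X - 1}$. To upgrade this chain of $\PP^{i_X - 1}$-bundles to a product decomposition $X \cong (\PP^{i_X - 1})^{\rho_X}$, I would produce $\rho_X$ commuting extremal contractions of fiber type on $X$, each realizing $X$ as a $\PP^{i_X - 1}$-bundle over one of the factors, and combine them via the universal property to obtain a morphism to the product that is forced to be an isomorphism by degree reasons.

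The main obstacles are the following. First, the rc-quotient target $Z$ need not be Fano nor $\QQ$-factorial, so the induction hypothesis does not literally apply; one must replace $Z$ by a suitable extremal contraction of fiber type on $X$, which requires ruling out divisorial and small contractions with high-codimensional image via bend-and-break and Wi\'sniewski's length inequality. Second, in the singular setting of this paper (isolated quotient singularities), the deformation theory of rational curves has to be adapted, because curves passing through or near singular points have fewer free deformations; controlling this loss of dimension appears to be precisely where the strengthened hypothesis $C \cdot (-K_X) > \max\{\tfrac{n}{2} + 1, \tfrac{2n}{3}\}$ from the abstract enters, providing enough deformation freedom to force $\dim Z = 0$ and hence $\rho_X = 1$. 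Finally, rigidifying the product structure in the equality case --- descending from a chain of $\PP^{i_X - 1}$-bundles to a genuine product --- is classical for smooth $X$ but requires a dedicated Mori-theoretic argument in the singular regime, and I would expect this to be the most delicate step.
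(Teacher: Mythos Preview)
The statement you are trying to prove is labeled \textbf{Conjecture} in the paper, not Theorem; the paper offers no proof of it and does not claim one. The Generalized Mukai Conjecture is stated only as motivation, and the paper's actual contribution is Theorem~\ref{mainthm}, which concerns the much more restricted situation $i_X > \max\{\tfrac{n}{2}+1,\tfrac{2n}{3}\}$ for varieties with isolated quotient singularities. So there is no ``paper's own proof'' to compare your proposal against.

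On the mathematics of your sketch: what you outline is essentially the strategy of \cite{bcdd02} and \cite{aco03}, which succeeds only under additional hypotheses (toric, $\dim X\le 5$, $i_X\ge \tfrac{n}{3}+1$ with a fiber-type or non-small contraction, etc.). The obstacles you list in your final paragraph are real and, as of the paper's writing, unresolved in general. In particular, the step ``each rc-quotient step reduces Picard number by at least $1$'' is not justified: the rc-quotient $Z$ is only a rational image, and controlling $\rho_Z$ requires passing to an honest extremal contraction, which in turn requires ruling out small contractions---exactly the hypothesis \cite{aco03} had to impose. Your induction also needs $Z$ (or the contraction target) to be a smooth Fano with $i_Z\ge i_X$, neither of which is automatic. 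So your proposal is not a proof but a recapitulation of the known partial approach together with an honest list of its gaps; that is appropriate commentary on an open conjecture, but it should not be presented as a proof.
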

For each  integer $ k \geq 1$, we can consider the following conjecture.
\begin{Con}[Generalized Mukai Conjecture$\,_k$]\label{subconjecture}
Let $X$ be a smooth Fano variety of
dimension $n$.  
\begin{enumerate}
\item Suppose that $i_X > \frac{n}{k}+1$. Then  $\rho_X  \leq k-1$.
\item Suppose that $i_X \geq \frac{n}{k}+1$   and $\rho_X =k$. Then $X \cong (\PP^{\frac{n}{k}})^k$.
\end{enumerate}
\end{Con}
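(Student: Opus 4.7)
The plan is to proceed by induction on $k$ and use a family of minimal rational curves to produce a fibration reducing the problem to a lower-dimensional instance of the conjecture. For $k=1$ the hypothesis $i_X>n+1$ is vacuous by Kobayashi--Ochiai, so assume $k\geq 2$. I would fix a dominating family $\cV$ of rational curves on $X$ of minimal anticanonical degree $d$; the hypothesis forces $d\geq n/k+1$, so by Mori's bend-and-break the sub-family of $\cV$ through a general point has dimension at least $n/k-1$ (and at least $n/k$ under the strict inequality in~(1)). Invoking the Cho--Miyaoka--Shepherd-Barron / Araujo--Druel--Kov\'acs theory of rationally connected fibrations then produces a rational map $\pi\colon X\dashrightarrow Y$ whose general fiber is rationally connected with respect to $\cV$.

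Next I would argue that $\pi$ extends to a Mori contraction of fiber type whose general fiber $F$ is isomorphic to $\PP^{n/k}$. Since $F$ is rationally connected and any curve $C\subset F$ satisfies $(-K_F)\cdot C=(-K_X)\cdot C\geq i_X\geq n/k+1$, the pseudo-index of $F$ is $\geq n/k+1$. By Kobayashi--Ochiai, $\dim F+1\geq i_F\geq n/k+1$, forcing $\dim F\geq n/k$; a dimension count using the minimality of $\cV$ shows equality, and then the equality case of Kobayashi--Ochiai yields $F\cong\PP^{n/k}$. For a $\PP^{n/k}$-bundle one has $\rho_X=\rho_Y+1$, and $Y$ is a smooth Fano variety of dimension $n(k-1)/k$ whose pseudo-index satisfies $i_Y\geq n/k+1=\frac{n(k-1)/k}{k-1}+1$. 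The inductive hypothesis applied to $Y$ then yields $\rho_Y\leq k-2$ and hence $\rho_X\leq k-1$, proving~(1). For part~(2), induction further gives $Y\cong(\PP^{n/k})^{k-1}$, after which a rigidity argument is needed to conclude that the $\PP^{n/k}$-bundle $X\to Y$ is trivial: the natural route is to pull back the $k-1$ projections of $Y$ to obtain a full set of commuting extremal contractions on $X$ and invoke a characterization of products of projective spaces in the spirit of Occhetta--Wi\'sniewski.

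The main obstacle is the middle step: passing from the rational quotient given by an unsplit family to a genuine equidimensional morphism while simultaneously controlling the singularities and the pseudo-index of the base $Y$. Even when the generic fiber is projective space, degenerations can prevent $\pi$ from being defined everywhere, and $Y$ may a priori acquire singularities that obstruct induction; guaranteeing that no rational curve of anticanonical degree $\leq n/k+1$ appears on $Y$, so that the induction applies with the required bound on $i_Y$, requires careful use of the strict inequality in~(1). Part~(2) is more delicate still, since the product decomposition of $Y$ does not automatically pull back to one on $X$, and the rigidity input is genuinely non-trivial. This is precisely why the existing literature has been able to settle the conjecture only for small values of $k$, and why the present paper focuses on the case $k=2$.
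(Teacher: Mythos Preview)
The paper does not prove this statement: it is recorded as a conjecture (Conjecture~\ref{subconjecture}), and the surrounding discussion only surveys the known partial results ($k=1$ via Mori and Cho--Miyaoka--Shepherd-Barron/Kebekus, $k=2$ via Wi\'sniewski, and some $k=3$ cases due to \cite{bcdd02} and \cite{aco03}). The paper's own contribution is the singular $k=2$ statement, Theorem~\ref{mainthm}. So there is no ``paper's proof'' to compare your proposal against.

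That said, your sketch is a reasonable outline of the standard strategy, and the obstacles you flag in your last paragraph are exactly the ones that make this an open problem. A few of the intermediate claims you state as if they were routine are in fact part of the difficulty and should not be presented as settled:
\begin{itemize}
\item You assert that the rationally connected quotient $\pi$ extends to a Mori contraction of fiber type. Passing from an unsplit covering family to an honest equidimensional contraction is not automatic; this is precisely where the argument breaks in general.
\item You write ``$Y$ is a smooth Fano variety of dimension $n(k-1)/k$''. Neither smoothness nor Fano-ness of the base of such a fibration is known in this generality, and without them the induction does not even get started.
\item The bound $i_Y\geq n/k+1$ is unjustified: a rational curve on $Y$ need not lift to a curve on $X$ of the same $-K$-degree, so the pseudo-index of $Y$ is not controlled by that of $X$ in any obvious way.
\item Even granting $Y\cong(\PP^{n/k})^{k-1}$, deducing that the $\PP^{n/k}$-bundle over it is a product is a genuine problem, as you note.
\end{itemize}
In short, your proposal is a plausible plan of attack rather than a proof, and the paper treats the statement accordingly.
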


Clearly,  Conjecture~\ref{subconjecture}  for all $k$ implies  Conjecture~\ref{genmukai}.

When $k=1$, the first part of Conjecture~\ref{subconjecture} implies that if $i_X >n+1$ then $\rho_X \leq 0$. 
Since the 
Picard number of any projective variety is always at least $1$, it implies that $i_X \leq n+1$ for any 
smooth Fano variety of dimension $n$. 
This consequence follows from  Mori's famous bend and break result.  
The second part of Conjecture~\ref{subconjecture} follows from  the characterization result  
of projective spaces by Cho, Miyaoka and Shepherd-Barron \cite{cmsb02} and Kebekus \cite{ke01}.
For  $k=2$,  Wi\'sniewski \cite{wi90} proved that  if $i_X > \frac{n}{2} +1$ then $\rho_X = 1$.

For $k=3$, there are related  results due to Bonavero, Casagrande, Debarre 
and Druel \cite{bcdd02}, and Andreatta,  Chierici, and 
Occhetta \cite{aco03}.
We state their precise  results here.  Bonavero, Casagrande, Debarre and   Druel \cite{bcdd02}  proved  
Conjecture~\ref{genmukai} in the following situations: (1) $X$ has dimension $4$, (2) $X$ is a toric variety
of pseudo-index $i_X  \geq \frac{n}{3} + 1$ or of dimension at most 7. 
Andreatta,  Chierici, and Occhetta \cite{aco03} proved the generalized Mukai conjecture
assuming either  (1) $i_X  \geq \frac{  n}{3} + 1$ and $X$ has a fiber type extremal contraction, 
(2)  $ i_X \geq \frac{ n}{3} + 1$ and $X$ has not small extremal contractions, 
or (3) $dim \;X = 5$.

Now consider the case when $X$ is not smooth.
Even the $k=1$ case, i.e. $i_X \leq n+1$, is not known.     
When $X$ has  only  quotient singularities or isolated LCIQ singularities, Tseng and I proved that $i_X \leq n+1$ \cite{ct05}.  
In the same paper, we also proved if $X$ is  log terminal and $dim\;X=3 $ then $i_X \leq 3+1$. 

In \cite{ct05a}, we proved that (1) if $X$ has at worst isolated LCIQ singularities 
and $i_X \geq n+1$ then $X \cong \PP^n$, and (2)
if $X$ a normal $\QQ$-factorial Fano threefold 
and $i_X \geq 4$ then $X \cong \PP^3$.  In \cite{ch06}, we proved that if $X$ has 
at worst quotient singularities and $i_X \geq n+1$ then $X  \cong \PP^n$.

In this note, we would like to study the $k=2$ case when the variety $X$ has only isolated quotient singularities. 
The following theorem is the main result of this note.      
\begin{Thm}\label{mainthm}
Let $X$ be a Fano variety of dimension $n$ with at worst isolated quotient singularities. 
If  $i_X > max \{\frac{n}{2}+1, \frac{2n}{3} \}$, then   $\rho_X=1$.

\end{Thm}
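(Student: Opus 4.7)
The plan is to argue by contradiction, adapting Wi\'sniewski's two--extremal--ray argument from the smooth case to the quotient--singularity setting. Assume $\rho_X \geq 2$; since quotient singularities are $\mathbb{Q}$--factorial and klt, the cone theorem gives two distinct $K_X$--negative extremal rays $R_1, R_2$ of $\overline{NE}(X)$, with contractions $\phi_i : X \to Y_i$ and minimal rational curves $C_i$ generating $R_i$, satisfying $-K_X \cdot C_i \geq i_X$.

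At a general (smooth) point $x \in X$, bend--and--break shows that the locus $Z_i(x)$ swept out by deformations of $C_i$ through $x$ has dimension at least $i_X - 1$, and every curve in this locus lies in $R_i$. When both $\phi_i$ are of fiber type, this already suffices: using $i_X > n/2 + 1$, the generic fibers of $\phi_1$ and $\phi_2$ through $x$ have combined dimension exceeding $n$, so they meet in positive dimension, and any curve in the intersection is contracted by both $\phi_1$ and $\phi_2$ and therefore lies in $R_1 \cap R_2 = \{0\}$, a contradiction.

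The genuinely new case, responsible for the stronger threshold $2n/3$, is when at least one $\phi_i$ is birational. The exceptional locus of a birational extremal contraction may be forced through the finite singular set of $X$, so the standard smooth--case relations between length, $\dim \operatorname{Locus}(R_i)$, and fiber dimension can degrade. The plan is to use the local quotient uniformization $\mathbb{C}^n / G \to (X, p)$ at each isolated singular point $p$, together with the control on rational curves through quotient singular points established in \cite{ct05} and \cite{ct05a}, to obtain a refined lower bound for $\dim \operatorname{Locus}(R_i)$ in the birational case. Under the hypothesis $i_X > 2n/3$, the resulting bounds for $R_1$ and $R_2$ still force $\operatorname{Locus}(R_1) \cap \operatorname{Locus}(R_2)$ to contain a curve, which again must lie in $R_1 \cap R_2$, yielding the desired contradiction.

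The main obstacle is precisely this quantitative control in the birational case: bounding the loss in the dimension of $\operatorname{Locus}(R_i)$ caused by forcing minimal rational curves through isolated quotient singular points. The threshold $2n/3$ encodes the worst--case loss consistent with an intersection argument still being effective, and the key technical step will be a $G$--equivariant deformation analysis of minimal rational curves at each singularity together with a careful enumeration of the subcases (divisorial versus small, singular point in the exceptional locus or not) to verify that all reduce to the same final intersection inequality.
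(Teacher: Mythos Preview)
Your proposal has a real gap in the birational case, and it misidentifies both the obstacle and the role of the threshold $2n/3$.

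The fiber-type/fiber-type case is fine: a general fiber $X_y$ of $\phi_i$ is smooth Fano with $-K_{X_y}$ the restriction of $-K_X$, so Mori produces a rational curve of degree at most $\dim X_y+1$, forcing $\dim X_y\geq i_X-1>n/2$; two such fibers through a common general point then meet in a curve contracted by both. But once a $\phi_i$ is birational you offer only a plan: ``a refined lower bound for $\dim\operatorname{Locus}(R_i)$'' via ``$G$-equivariant deformation analysis'' and ``careful enumeration of subcases,'' with no bound stated and no mechanism given. Even if you grant yourself, via Lemma~\ref{1twisted} and Lemma~\ref{dimoflocus}, closed sets $W_1,W_2$ of dimension $>n/2$ swept by unsplit $R_i$-curves through a fixed point, the fixed point is \emph{forced on you} (it is the image of the twisted marking, typically a singular point of $X$), and two half-dimensional closed subsets of $X$ based at different points need not meet. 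The paper's introduction flags exactly this: the naive two-ray dimension count is ``very problematic since we can have two disjoint divisors in a variety.'' Your enumeration of divisorial/small subcases does not address this incidence problem.

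The paper's architecture is different. It does not start from two extremal rays; it starts from a \emph{covering} family of twisted rational curves of $-K_{\mX}$-degree at most $2n$ (Lemma~\ref{cover2}), chosen of minimal degree, and splits on the number $k$ of twisted points on a general member. The hypothesis $i_X>2n/3$ is not a deformation-theoretic ``loss term'' as you conjecture; it is the purely numerical fact that a curve of degree $\leq 2n$ can break into at most two non-contracted pieces, since each piece has degree $\geq i_X>2n/3$. For $k\geq 2$ (Proposition~\ref{2twistedpoints}), the two twisted points have fixed images, bend-and-break forces degeneration, and the degenerate locus contains a divisor swept by an unsplit family with at most one twisted point. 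For $k\leq 1$ (Proposition~\ref{1point}), one reduces the degree to $\leq n+1$, hence to an unsplit covering family. Both cases feed into Lemma~\ref{divisorlemma}, which \emph{then} runs an argument close in spirit to yours (choose an extremal ray meeting the divisor $D$ positively, build $W_1$, intersect, rule out divisorial and positive-dimensional fiber-type targets), but with the crucial difference that the divisor $D$ produced by the covering family guarantees the nonempty intersections you were unable to arrange.
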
  

Consider a smooth Fano variety $X$.
There are a lot of rational curves on $X$ and this fact is  important in  studying the geometry of $X$. 
  The deformation theory of rational curves is well-known and has many important consequences. 
When $X$ is not smooth, the situation is quite different. 
The dimension of deformations (of rational curves) tends to be smaller.  
It is also possible that  all rational curves pass through a singular point. 
These obstacles make the study of deformations of rational curves on
singular Fano varieties quite difficult.  
When $X$ has only quotient singularities, one natural approach  
is to consider the covering stack $\mX$ and 
 study "rational curves" on the covering stack $\mX$. Pushing forward any 
family of curves on $\mX$  to $X$ yields a 
family of curves on $X$. If we can obtain a nice family of rational curves on $\mX$, we have a nice 
family of rational curves on $X$ for free.  
Let $g: C \to \mX$ be a rational curve. The lower bound  on the dimension of the 
deformation space of $g: C \to \mX$  can be computed as in the smooth case. 
Roughly speaking, the deformation theory does not see the twisted points on the target $\mX$.
So far, everything seems work. But there are several difficulties for this approach.
One  problem is that when we have a curve $g: C \to X$ it may not be possible to 
 lift it to  $\tilde{g}: C \to \mX$. To get a lifting, one has to add twisted points on 
the source curve $C$ \cite{av02}. Therefore, we have to study the deformations of twisted curves into $\mX$. 
Unfortunately, the presence of twisted points on the source curve will lower the dimension of the 
deformation space (due to the age terms, see Lemma~\ref{def} for the precise formula). The second 
bad news is that even if we start with a 
curve (without twisted points) $g: C \to\mX$, we may have to consider twisted points after applying  
bend and break to obtain  curves of smaller degree.  

In \cite{ct05} and \cite{ct05a}, we developed several  techniques to handle the problems caused by the 
presence of twisted points. One of our 
main results in \cite{ct05} is that if we start with a curve (in an extremal ray) without 
twisted point then we can find a twisted curve with at most one twisted point  in the same extremal ray 
and bound the degree at the same time.  
Twisted curves with only one twisted point are good enough in many cases when we try to apply  bend and break 
type arguments.

The   starting point of this work is  a very naive dimension count. 
Suppose that $\rho_X \geq 2$. We can find 
two extremal classes $\alpha$ and $\beta$. Lemma~\ref{1twisted} guarantees that we can find twisted curves
$f_1: \mC_1 \to \mX$ and $f_2: \mC_2 \to \mX$ whose classes are $\alpha$ and $\beta$ respectively. For $i=1, \;2$, the twisted curve $\mC_i$  has at most one twisted point and $ \mC_i \cdot -K_{\mX} \leq n+1$.
For simplicity, assume both  $\mC_1$ and $\mC_2$  do have  one twisted point.  
Let $V_1 \subset \mX$ be  the subset swept by the 
deformation of $f_1(\mC_1) \subset \mX$ and $V_2 \subset \mX$  the subset swept by the 
deformation of $f_2(\mC_2) \subset \mX$. 
Since the twisted points on $\mX$ are isolated, the image of the twisted point on $\mC_1$ does not deform. 
Therefore  every  deformation of $f_1(\mC_1)$ has to pass through the specified point. Similarly, every deformation of 
$f_2( \mC_2)$ also has to pass through another specified point. 
Lemma~\ref{boundpicardnumber2} implies that these two sets should not intersect. 
It is easy to see  $dim V_1 \geq i_X-1$ and $dimV_2 \geq i_X -1$. 
We "expect" them to intersect if $2i_X-2 -n  >0$, i.e. $i_X > \frac{n}{2}+1$. 
Therefore, the assumption $\rho_X \geq 2$ is wrong whenever $i_X > \frac{n}{2}+1$. 
Of course, this "argument" is  very problematic since we can have two  disjoint divisors in a variety. 

To remedy this incomplete argument, we consider  a covering family  $\mC \to \mX$ of twisted stable maps.      
Note that there may be many twisted points  on a general source curve $\mC_t$. 

We only sketch the argument for the worst case, i.e. the source curve $\mC_t$ has at lease two 
twisted points.  In general, the presence of twisted points on the source curve causes troubles.
However, when $X_{sing}$ is isolated, we can play the presence of many twisted points  to our advantage via
bend and break type arguments: the  images of these twisted points can not move;  Mori's bend and break  result  then implies this    
covering family will degenerate somewhere. 

Assume that  $\mC_t$ has at least $2$ twisted points. Using a bend and break type argument, we can  show that there is a family of unsplit curves such that the locus 
of this family contains at least a divisor (see Proposition~\ref{2twistedpoints} for more details).  Denote this divisor  by $D$.  
Once we have such a family of curves, we  find an extremal  contraction, denoted by $\phi: X \to Y$,  whose exceptional set contains the  divisor $D$. We will show that $Y$ is a point. 
This contraction is either a divisorial contraction or a contraction of fiber type.
The contraction $\phi: X \to Y$  has the following property: $C \cdot D >0$ where $C$ is any curve contracted by $\phi: X \to Y$.  

If the contraction $\phi: X \to Y$ is divisorial, then the inequality $C \cdot D >0$ gives a contradiction by a standard argument.

If the contraction is of fiber type and $dim \;Y >0$, then we can find a closed subset $Z \subset X$ such that $dim\; Z> dim\;Y$ and $Z \to Y$ is finite. This is again a contradiction (see Lemma~\ref{divisorlemma} for more details).

  The rest of this paper is organized as follows:
In Section 2, we recall basic definitions on  twisted curves and twisted stable maps. 
We also recall  Lemma 7.1 from \cite{aco03} in Section 2.   
The main theorem is proved in Section 3.

\section*{Acknowledgements}  
The author  likes to
thank Dan Abramovich,  Lawrence Ein and Hsian-Hua Tseng 
for helpful  discussions and valuable suggestions. The author also likes to thank the referee for his valuable comments and suggestions.

\section{Twisted curves} 
We recall some basic facts on twisted curves in this section.  All results in this section are not new; we recall these facts for the reader's 
convenience. 

Twisted curves appear naturally in compactifying stable maps into a proper Deligne-Mumford stack  \cite{av02}.   
Roughly speaking, twisted curves  are nodal curves having  certain stack structures \'etale locally near nodes (and, for pointed curves, marked points).  
For the precise definition, see  \cite{av02}, Definition 4.1.2.

Let $\mC$ be a twisted curve and $C$ its coarse moduli space.
\subsubsection{Nodes}
For a positive integer $r$, let $\mu_r$ denote the cyclic group of $r$-th roots of unity. \'Etale locally near a node, a twisted curve $\mC$ is isomorphic to the stack quotient $[U/\mu_r]$ of the nodal curve $U=\{xy=f(t)\}$ by the following action of $\mu_r$: $$(x,y)\mapsto (\zeta_r x,\zeta_r^{-1} y),$$ where $\zeta_r$ is a primitive $r$-th root of unity. \'Etale locally near this node, the coarse curve $C$ is isomorphic to the schematic quotient $U/\mu_r$.
\subsubsection{Markings}
\'Etale locally near a marked point, $\mC$ is isomorphic to the stack quotient $[U/\mu_r]$. Here $U$ is a smooth curve with local coordinate $z$ defining the marked point, and the $\mu_r$-action is defined by $$z\mapsto \zeta_r z.$$ Near this marked point the coarse curve is the schematic quotient $U/\mu_r$.
\subsection{Twisted stable maps}
\begin{Def}
A twisted $n$-pointed stable map of genus $g$ and degree $d$ over a scheme $S$ consists of the following data (see \cite{av02}, Definition 4.3.1):
$$\begin{CD}
\mC @>f>>  \mX \\
@V{\pi_\mC}VV @V{\pi}VV \\
C @>\fbar>> X \\
@V{}VV \\
S.
\end{CD}$$
along with $n$ closed substacks $\Sigma_i\subset \mC$ such that
\begin{enumerate}
\item $\mC$ is a twisted nodal $n$-pointed curve over $S$ (see \cite{av02}, Definition 4.1.2),
\item $f:\mC\to \mX$ is representable,
\item $\Sigma_i$ is an \'etale gerbe over $S$, for $i=1,...,n$, and
\item the map $\fbar: (C,\{p_i\})\to X$ between coarse moduli spaces induced from $f$ is a stable $n$-pointed map of degree $d$ in the usual sense.
\end{enumerate}
\end{Def}

A twisted map  $f:\mC\to \mX$ is stable if and only if for every irreducible component $\mC_i\subset \mC$, one of the following cases holds:
\begin{enumerate}
\item
$f|_{\mC_i}$ is nonconstant,
\item
$f|_{\mC_i}$ is constant, and $\mC_i$ is of genus at least $2$,
\item
$f|_{\mC_i}$ is constant, $\mC_i$ is of genus $1$, and there is at least one special points on $\mC_i$,
\item
$f|_{\mC_i}$ is constant, $\mC_i$ is of genus $0$, and there are at least three special points on $\mC_i$.
\end{enumerate}
In particular, a nonconstant representable morphism from a smooth twisted curve to $\mX$ is stable.

We say a twisted stable map $\mC \to \mX$ is rational if the coarse moduli space $C$ of $\mC$ is rational.

We will use the following two basic facts on twisted stable maps frequently in this note: \newline 

(1) Let $f:\mC \to \mX$  be a twisted stable map. Then the image of any twisted point on $\mC$ has to be a twisted point on $\mX$. \newline 

(2) Consider a family of rational  twisted stable map  $F:\mC \to \mX$ over an irreducible projective scheme $T$. If this family does not degenerate, i.e.  (i) $\mC_t$  is irreducible for every $t \in T$
  and (ii) $F_t: \mC_t \to \mX$ is birational to its image for every $t$, then the number of twisted points on $\mC_t$ is constant. \newline 

Fix  an ample $\QQ$-line bundle $H$ on $X$. 
Let $\mK_{g,n}(\mX,d)$ be  the category of twisted $n$-pointed stable maps 
to $\mX$ of genus $g$ and degree $d$ with respect to the pull-back of $H$. We will set $H= -K_X$ when $X$ is a Fano variety with at worst quotient singularities. 
It is known  that $\mK_{g,n}(\mX,d)$ is a proper Deligne-Mumford stack 
with projective coarse moduli space denoted by  $K_{g,n}(\mX,d)$ \cite{av02}.
Let $\beta \in H_2(X)$ be a homology class. 
The space of twisted $n$-pointed stable maps $f:\mC \to \mX$ of genus $g$  and  
  homology class $(\pi \circ f)_*[\mC]= \beta$ is denoted by  $\mK_{g,n}(\mX, \beta)$.  This  stack is also proper \cite{av02}.

\subsection{Morphism space  from a twisted curve to a Deligne-Mumford stack}
In this paper, we use both the stack of  twisted stable maps and  
the morphism space from $\mC$ to $\mX$. Roughly speaking, an element in 
the morphism space $Mor(\mC, \mX)$ is a twisted stable map together with a 
parametrization
on the source curve $\mC$.   
Let  $\Sigma \subset \mC$ be the set of twisted points and 
$B \subset \mC$  a finite set of points (twisted or untwisted).
Let $f:\mC \to \mX$ be a representable morphism.  
When $\mX$ is smooth, 
we have a lower bound on  
the dimension of  $Mor(\mC,\mX;f|_B)$ near the morphism $[f]$.
\begin{Lem}[= \cite{ct05} Lemma 4.4] \label{def}
$$dim_{[f]}Mor(\mC,\mX;f|_B)\geq -\mC\cdot K_\mX + n [\chi(\mO_{\mC}) - \;Card(B)]-\sum_{x\in \Sigma\setminus B} age(f^* T_\mX,x).$$
\end{Lem}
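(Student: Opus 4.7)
The plan is to use standard deformation theory for representable morphisms from a twisted curve to a smooth Deligne--Mumford stack, together with orbifold Riemann--Roch on $\mC$.

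First, since $\mX$ is smooth and $f$ is representable, first-order deformations of $f$ that agree with $f$ on $B$ form the vector space $H^0(\mC, \mF)$ and the obstruction space embeds in $H^1(\mC, \mF)$, where $\mF \subset f^* T_\mX$ is the subsheaf of germs vanishing along $B$ (interpreted in the appropriate stacky sense at twisted points of $B$). The standard deformation-theoretic inequality then gives
$$\dim_{[f]} Mor(\mC, \mX; f|_B) \;\geq\; h^0(\mF) - h^1(\mF) \;=\; \chi(\mC, \mF).$$

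Next, I would invoke orbifold Riemann--Roch: for a rank-$n$ locally free sheaf $E$ on a twisted curve $\mC$ with set of stacky points $\Sigma$, one has
$$\chi(\mC, E) \;=\; \deg(E) + n\,\chi(\mO_\mC) - \sum_{x \in \Sigma} age(E, x),$$
where $age(E, x)$ is computed from the representation of the stabilizer at $x$ on the fiber $E_x$. I would then evaluate each term for $E = \mF$: using $c_1(T_\mX) = -K_\mX$, one has $\deg(f^* T_\mX) = -\mC \cdot K_\mX$, and each point of $B$ forces a $-n$ drop in degree (a non-twisted point $p$ via the ideal $\mO_\mC(-p)$, a twisted point via an equivalent divisor class on $\mC$). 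The age sum reduces to $\sum_{x \in \Sigma \setminus B} age(f^* T_\mX, x)$: outside $B$ the age of $\mF$ agrees with that of $f^* T_\mX$, while at twisted points inside $B$ the imposed vanishing condition absorbs the corresponding age contribution. Assembling everything into Riemann--Roch yields the claimed inequality.

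The main technical subtlety, and the step most likely to require care, is the local analysis at a twisted point $x \in B$: one has to check that the combined degree and age shifts for $\mF$ produce exactly ``subtract $n$ from the degree and remove $age(f^* T_\mX, x)$ from the age sum.'' This is a routine but delicate $\mu_{r_x}$-equivariant computation on the local model $[\Spec k[[z]]/\mu_{r_x}]$ of $\mC$ near $x$, using the explicit action of the stabilizer on both the local coordinate and the fiber of $f^* T_\mX$.
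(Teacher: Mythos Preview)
The paper does not prove this lemma: it is quoted verbatim as Lemma~4.4 of \cite{ct05} and used as a black box, with no argument given here. So there is no ``paper's own proof'' to compare your proposal against.

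That said, your outline is the standard route and is essentially what the proof in \cite{ct05} does: identify tangent and obstruction spaces for $Mor(\mC,\mX;f|_B)$ with $H^0$ and $H^1$ of the appropriate twist of $f^*T_\mX$, then compute $\chi$ by orbifold Riemann--Roch, picking up an age term at each twisted point not in $B$ and a $-n$ for each point of $B$. Your flagged subtlety at twisted points of $B$ is real but routine, and you have correctly identified both what needs to be checked and how to check it.
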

\begin{Rem}\label{age}
For each twisted point $x \in \mC$, the contribution from the age term  is strictly less than  $dim \;X=n$ \cite{ct05}.  
\end{Rem}

Consider  $$\begin{CD}
\mC @>F>>  \mX \\
@V{g}VV  \\
T \\
\end{CD}$$
a family of  twisted stable maps.  
We are not only interested in the dimension of $T$ but also the dimension of $F(\mC)$. 
In general, information on the  dimension of $T$ (the parametric space) does not yield much 
information  on  the dimension of $F(\mC)$ (the locus swept by the curves  $\{\mC_{t} | t \in T \}$). 
The main problem is that for any given point $x$ there may be a positive dimensional family of curves passing through $x$. 
For a special class of curves, this problem disappears.
\begin{Def}
Let $C \subset X$ be a curve. We say $C$ is unsplit if  the homology class  $[C] \in H_{2}(X)$  can not be written 
as a non-trivial positive integral sum of curve classes, i.e.  if  $[C]=\sum_{i=1}^{k} a_i[C_i]$ ($a_i \in \NN_{>0}$) then $k=1$ and $[C_1]= [C]$. 
\end{Def}
  
\begin{Def}
Let 
$$\begin{CD}
\mC @>F>>  \mX \\
@V{\pi_\mC}VV @V{\pi}VV \\
C @>\Fbar>> X \\
@V{\Gbar}VV \\
T
\end{CD}$$
be a  family of twisted stable maps. We say it is an unsplit family  if 
$F_t$ is birational to its image and  $(\pi \circ F_t)(\mC_t) \subset X$ is unsplit for every $t \in T$. 
\end{Def}

When the family is unsplit, we have the following bound:

\begin{Lem} \label{dimoflocus}
Let 
$$\begin{CD}
\mC @>F>>  \mX \\
@V{\pi_\mC}VV @V{\pi}VV \\
C @>\Fbar>> X \\
@V{\Gbar}VV \\
T
\end{CD}$$
be an   unsplit family of twisted $1$-pointed stable maps. Assume the image of the marked point is fixed. Let $d= \mC_t \cdot -K_{\mX}$ be the anti-canonical degree.
Then
$$ dim  F(\mC) \geq d -1 $$
\end{Lem}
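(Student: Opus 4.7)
The plan is to establish the bound as the twisted-curve analog of the standard Mori-theoretic count for an unsplit family of rational curves through a fixed point. The strategy has two parts: a lower bound on $\dim T$ from the deformation theory of twisted stable maps (Lemma~\ref{def}), and an upper bound on the generic fiber dimension of the total evaluation $F:\mC\to\mX$ coming from the unsplit hypothesis via a bend-and-break argument.

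First I would bound $\dim T$ from below. Since the image of the marked point is fixed at some $x_0\in\mX$, the parameter scheme $T$ sits inside the fiber over $x_0$ of the evaluation map $\mK_{0,1}(\mX,\beta)\to\mX$, where $\beta=(\pi\circ F_t)_*[\mC_t]$. Applying Lemma~\ref{def} to a general $f=F_t:\mC_t\to\mX$ with $B=\{\text{marked point}\}$ and using $\chi(\mO_{\mC_t})=1$ gives
$$\dim_{[f]}\Mor(\mC_t,\mX;f|_B)\geq d-\sum_{x\in\Sigma\setminus B}age(f^*T_\mX,x).$$
In the intended setting (as supplied by Lemma~\ref{1twisted}) the source $\mC_t$ has at most one twisted point, which is taken to be the marking, so the age sum vanishes and $\dim\Mor\geq d$. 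Modding out by the two-dimensional automorphism group $\Aut(\mC_t,\text{marked point})$ of the source fixing the marking yields $\dim T\geq d-2$, and therefore $\dim\mC=\dim T+1\geq d-1$.

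Next I would show that $F:\mC\to F(\mC)$ is generically finite, which promotes this dimension bound on $\mC$ to one on the image. For a general $p\in F(\mC)\setminus\{x_0\}$, since each $F_t$ is birational to its image, $F_t^{-1}(p)$ is finite, and hence $\dim F^{-1}(p)=\dim\{t\in T:p\in F_t(\mC_t)\}$. This set parametrizes curves of the family passing through both fixed points $p$ and $x_0$. If it were positive-dimensional, then applying bend-and-break on the coarse image family in $X$ with the two points held fixed produces a reducible degeneration whose components each carry strictly smaller $-K_\mX$-degree; lifting the decomposition back to $\mX$ contradicts the unsplit hypothesis on the class $[\mC_t]$. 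Thus $\dim F^{-1}(p)=0$ and
$$\dim F(\mC)=\dim\mC\geq d-1.$$

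The main obstacle I foresee is the age bookkeeping in the first step when the source carries twisted points other than the marking: Lemma~\ref{def} then degrades to $\dim\Mor\geq d-\sum age$, and one must either restrict to the single-twisted-point setting of Lemma~\ref{1twisted} (which is the regime in which this lemma is invoked in the sequel) or argue separately that such age contributions are controlled. The bend-and-break step is standard on the coarse-moduli side; the only care needed is verifying that the reducible specialization lifts to twisted stable maps whose component classes form a nontrivial decomposition of $[\mC_t]$ in $H_2(X)$, so that the unsplit contradiction can be triggered cleanly.
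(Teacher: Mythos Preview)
Your proof is correct and matches the paper's one-line argument, which likewise invokes Lemma~\ref{def}, Mori's bend and break, and $\dim\Aut(\PP^1,0)=2$. The age obstacle you flag does not in fact arise here: by definition a twisted curve carries stack structure only at nodes and marked points, so for an unsplit (hence irreducible, node-free) $1$-pointed source one has $\Sigma\subseteq\{\text{marking}\}=B$ and the age sum in Lemma~\ref{def} is automatically empty---which is why the paper can dispose of the age issue with a bare citation of Remark~\ref{age}.
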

\begin{proof}
This lemma follows easily from  Mori's bend and break,  Lemma~\ref{def}, Remark~\ref{age} and the fact that $dim \Aut (\PP^1, 0)=2$.  
\end{proof}

\subsection{Covering Stack and Lifting}
We start with the notation of the covering stack $\mX$. 
\begin{notation}\label{stack}
Let $X$ be a normal projective variety with at worst quotient singularities.  
Fix a natural number $r \in \NN_{>0}$ such that $r K_X$ is Cartier.
Fix  a  proper smooth Deligne-Mumford stack $\pi: \mX \to X$ such that $X$ is a coarse moduli space of $\mX$ and  $\pi$ is 
an isomorphism over $X_{reg}=X \setminus X_{sing}$.  
Note that  $K_{\mX}=\pi^{*}K_X$ and $$\mC \cdot K_{\mX}= C
\cdot K_X$$ for any (twisted) curve $\mC \to \mX$ with coarse curve $C$
\cite{ct05}.
\end{notation}

Let $C$ be a smooth irreducible curve  and $\fbar: C \rightarrow X$ a morphism.
In general, it is not possible 
to ``lift'  the map $\fbar:C \to X$ to a map
$C \to \mX$.  However, we can endow an orbicurve structure on $C$  and lift $C \to X$ to $\mC \to \mX$  (by Lemma 7.2.5 \cite{av02}).

An arbitrary lifting is not very useful since the dimension lower bound obtained from Lemma~\ref{def} will be too small. One would like 
to have a lifting with as few twisted points as possible.  We do not know how to achieve this goal in general. 
If  the class of $C \subset X$ is in an extremal  ray,  we are able to find a nice lifting as in the next  lemma.

\begin{Lem}[Proposition 3.2 \cite{ct05a}]\label{1twisted}
Notation as in Notation~\ref{stack}.
Let $R \subset \overline{NE}(X)$ be any  $K_X$-negative  extremal ray. Then there exists a twisted rational
 curve $f:\mC \to \mX$ 
such that (1) $\mC$ has  at most one twisted point, (2) the intersection number  $\mC \cdot (-K_{\mX}) \leq n+1$, and (3) $(\pi \circ f)_*[\mC] \in R$.
\end{Lem}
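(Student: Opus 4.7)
The plan is to start from any rational curve representing the extremal ray $R$ on the coarse space $X$, lift it to the covering stack $\mX$, and then successively improve the lift by Mori bend-and-break applied on the smooth Deligne--Mumford stack $\mX$ until both the number of twisted points and the $(-K_{\mX})$-degree are under control. Two geometric observations power the argument: (a) every twisted point on the source must map to a twisted point on $\mX$, and since $X\sing$ is finite there are only finitely many of these, so the images of source twisted points are pinned to a fixed finite set of stacky points; and (b) since $R$ is extremal, each component of a degeneration of a curve in $R$ still has class in $R$.

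First I would produce an initial lift. By the cone theorem (applicable because quotient singularities are klt) pick a rational curve $\fbar:\PP^{1}\to X$ with $[\fbar(\PP^{1})]\in R$, and apply Lemma~7.2.5 of~\cite{av02} to obtain a representable morphism $f:\mC\to\mX$ from a twisted $\PP^{1}$ with $(\pi\circ f)_{*}[\mC]\in R$ and some set $\Sigma$ of twisted points. If $|\Sigma|\geq 2$, let $B\subset\Sigma$ have size two and apply Lemma~\ref{def}:
\[
\dim_{[f]}\Mor(\mC,\mX;f|_{B})\;\geq\;-\mC\cdot K_{\mX}-n-\sum_{x\in\Sigma\setminus B}\mathrm{age}(f^{*}T_{\mX},x).
\]
By Remark~\ref{age} each age term is strictly less than $n$; subtracting $\dim\Aut(\PP^{1},B)=1$, one finds a positive-dimensional family of deformations of $f$ fixing the images of the two chosen twisted points once the degree is sufficiently large, and Mori bend-and-break then forces the curve to degenerate into a reducible rational curve on $\mX$. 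By observation~(b) at least one component still has class in $R$, and replacing $f$ by (a re-lift of) that component decreases either the twisted-point count or the degree.

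The same mechanism with $|B|=1$ — take $B$ to be the sole remaining twisted point, or any smooth point if none exists — yields the bound $-\mC\cdot K_{\mX}\leq n+1$, since a curve with degree at least $n+2$ would again split. The whole procedure is driven by an induction on the lexicographic pair (number of twisted points, degree), which strictly decreases at each reduction step and terminates at a configuration satisfying conclusions (1), (2), (3). The step I expect to be the main obstacle is the re-lifting phase: after bend-and-break produces a reducible curve on $\mX$, each component is a rational map to $\mX$ that need not already be a twisted stable map, and reapplying Lemma~7.2.5 of~\cite{av02} can introduce new twisted points over $X\sing$. The delicate point is to argue that any such gain in twisted points is always offset by a strict drop in degree, so that the lexicographic pair really decreases. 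I would handle this by analyzing exactly where new twisted points can arise — only over the fixed finite set $X\sing$ — and comparing the discrete data (images and stabilizer orders of twisted points) before and after the re-lift to secure the net decrease.
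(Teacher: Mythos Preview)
The paper does not prove this lemma; it is quoted verbatim as Proposition~3.2 of \cite{ct05a}. So there is no in-paper proof to compare against, and your task was really to reconstruct the argument from \cite{ct05a}.

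Your strategy is in the right spirit, but the induction has a genuine gap. For bend-and-break with two fixed points to apply you need, after subtracting automorphisms, that $\dim\Mor(\mC,\mX;f|_{B})\geq 2$. With $B$ equal to two twisted points and $|\Sigma|=k$, Lemma~\ref{def} gives the lower bound $d-n-\sum_{x\in\Sigma\setminus B}\mathrm{age}(x)$; even in the best case $k=2$ this requires $d>n+1$, and for larger $k$ it requires roughly $d>(k-1)n$. Nothing in your setup forces the degree to be that large: an extremal rational curve in $R$ of degree $\leq n+1$ (whose existence is exactly what the cone theorem gives) that happens to pass through two or more points of the finite set $X\sing$ will lift to a twisted curve with $k\geq 2$ and $d\leq n+1$. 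At that configuration the dimension count does not trigger bend-and-break, and your lexicographic pair $(\text{\#twisted points},\text{degree})$ has no mechanism to decrease. The phrase ``once the degree is sufficiently large'' hides precisely this case.

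Two further remarks. First, your concern about ``re-lifting'' is misplaced: when a twisted stable map degenerates, each non-contracted component of the limit is already a representable morphism from a smooth twisted rational curve to $\mX$, so there is nothing to re-lift and no new twisted points appear except possibly at the node. Second, the paper's introduction (the paragraph citing \cite{ct05}) indicates that the actual argument proceeds in the opposite direction: one \emph{starts} from a curve with no twisted points and then tracks how at most one twisted point is introduced at each break, rather than starting from an arbitrary lift and trying to shed twisted points. Recasting your argument along those lines would close the gap.
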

\subsection{Twisted curves with at most 1 twisted point}\label{1twistedcurve}
Assume that $X$ is a normal projective variety of dimension $n$ with at worst isolated quotient singularities.
Let $f: \mC \to \mX$ be an unsplit twisted stable map of genus $0$ and homology class $\alpha \in H_2(X)$. Assume that the 
source curve $\mC$  has at most  $1$ twisted point.

If $\mC $ has one  twisted point, denoted by $\infty \in \mC$, 
take an irreducible component 
$\mM \subset \mK_{0,1}(\mX, \alpha)$ which contains $[f]$. Take the universal family of twisted stable maps over $\mM$. 
Note that the image of $\infty$ is fixed since  $X$ has only isolated singularities.

If $\mC$ does not have any twisted point, i.e. $\mC \cong \PP^1$, pick any point, denoted by $\infty $ again. Consider the curve $f: \mC \to \mX$ with the marked point $\infty \in \mC$  as an 
element of $\mK_{0,1}(\mX, \alpha) $. We  abuse the notation and still denote this 
element in $\mK_{0,1}(\mX, \alpha)$ by $[f]$. Consider an irreducible component $\mM \subset \mK_{0,1}(\mX, \alpha , f|_{\infty})$ which contains $[f]$. Take the universal family of twisted stable maps over $\mM$.

In either case, we obtain a family of twisted stable 
maps such that  the image of the marked point is fixed.

\subsection{Bounding Picard Number}
Let $X$ be a normal projective variety and 
 $$\begin{CD}
C @>F>>  X \\
@V{G}VV  \\
V \\
\end{CD}$$ be a family of stable maps of genus $0$  over an irreducible scheme  $V$. 

We denote by $Locus (V)$ the image of $C$ in $X$. 
Let $p \in X$ be a (closed) point.  
Define   $Locus(V)_p:=  F( G^{-1}(G( F^{-1}(p))))$, i.e.  the set of points  $x \in X$ such that there is a $v_0 \in V$ satisfying $x \in F(C_{v_0})$ and 
$p \in F(C_{v_0})$.

The following lemma is a special case of Lemma 5.1 in \cite{aco03}.

\begin{Lem}\label{boundpicardnumber2}
Let $X$ be a normal projective variety,  $p \in X$  any point, 
 and $V$ 
an unsplit family of
rational curves. Also assume that $V$ is irreducible and projective.
Then $Locus(V )_p$ is closed and every curve contained in $Locus(V )_p$
is numerically equivalent to   $\mu  C_{V}$,
where  $C_{V}$ is the class of $F(C_{v})$ for  $v \in V$ general and $\mu \geq 0$. 
\end{Lem}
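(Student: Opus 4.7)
The plan is to establish the two assertions separately: closedness of $Locus(V)_p$ via a routine proper-map chase, and the numerical proportionality via an intersection calculation on a ruled surface obtained by base change from the restricted family.

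\textbf{Closedness.} Since $V$ is projective and the family total space $C$ is proper over $V$, both $F$ and $G$ are proper morphisms. Then $F^{-1}(p)\subset C$ is closed; its image $V_p := G(F^{-1}(p))\subset V$ is closed by properness of $G$; $G^{-1}(V_p)\subset C$ is closed; and $Locus(V)_p = F(G^{-1}(V_p))\subset X$ is closed by properness of $F$.

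\textbf{Numerical proportionality.} Fix an irreducible curve $Z\subset Locus(V)_p$. The plan is to show that $L\cdot Z = 0$ for every $L\in N^1(X)_\QQ$ with $L\cdot C_V = 0$; by the perfect pairing between $N_1(X)_\QQ$ and $N^1(X)_\QQ$ this forces $[Z]\in \QQ\cdot [C_V]$, and pairing with an ample class then gives $\mu\ge 0$. I would choose an irreducible $Z'\subset G^{-1}(V_p)$ dominating $Z$ under $F$; by the projection formula it suffices to check $F^*L\cdot Z' = 0$, and I case-split on $G(Z')\subset V_p$. If $G(Z')$ is a single point $v$, then $Z' = G^{-1}(v)$ is an irreducible rational fiber whose image has numerical class a positive multiple of $[C_V]$ (by the unsplit hypothesis), so $F^*L\cdot Z' = 0$. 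If $G(Z') = C_0\subset V_p$ is a curve, I would base-change the restricted family $G^{-1}(C_0)\to C_0$ to the normalization $T\to C_0$, normalize the total space, and if necessary pass to a further finite cover so that the multisection $F^{-1}(p)\cap G^{-1}(C_0)\to C_0$---which surjects onto $C_0$ because every curve parametrized by $C_0\subset V_p$ passes through $p$---becomes an honest section. The outcome is a ruled surface $\pi\colon S\to T$ with an evaluation map $\widetilde F\colon S\to X$ and a section $\sigma\colon T\to S$ satisfying $\widetilde F\circ\sigma\equiv p$. On $S$, the pullback $\widetilde F^*L$ has $\pi$-fiber degree $L\cdot C_V = 0$, so numerically $\widetilde F^*L\equiv \pi^*N$ for some $N\in \mathrm{Pic}(T)_\QQ$. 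Pulling back by $\sigma$ gives $N = \sigma^*\widetilde F^*L = 0$, since $\widetilde F\circ\sigma$ is constant; hence $\widetilde F^*L\equiv 0$ on $S$, and intersecting with any lift of $Z'$ yields $F^*L\cdot Z' = 0$, as required.

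\textbf{Main obstacle.} The delicate step is the second case, where I must carefully execute the chain of normalizations and finite covers so that the multisection really becomes an honest section and so that the ruled surface $\pi\colon S\to T$ is sufficiently well-behaved (ideally a $\PP^1$-bundle over a smooth curve) for the numerical decomposition $\widetilde F^*L\equiv \pi^*N$ to hold. The unsplit hypothesis plays a double role: producing the irreducible fiber of class $[C_V]$ in the point-image case and, together with the ``birational to its image'' condition built into the definition of an unsplit family, validating the ruled-surface picture underlying the curve-image case.
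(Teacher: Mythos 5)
The paper does not give its own proof of this lemma: it simply states ``The following lemma is a special case of Lemma~5.1 in \cite{aco03}'' and leaves it at that. So there is no internal proof to compare your argument against; I can only assess it on its own merits and against the standard proof in the literature (which goes back to Koll\'ar, \emph{Rational Curves on Algebraic Varieties}, Chapter~IV, and is what \cite{aco03} invokes).

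Your argument is the standard one, and it is essentially correct. The proper-map chase for closedness is routine. For the numerical proportionality, reducing to showing $L\cdot Z=0$ for all $L$ with $L\cdot C_V=0$, lifting $Z$ to a curve $Z'$ in $G^{-1}(V_p)$, splitting into the point-image and curve-image cases, and in the latter running the intersection computation on a ruled surface with a contracted section is exactly the classical approach. Two points deserve to be made explicit rather than deferred to ``careful execution.'' First, in the curve-image case, the hypothesis that the family is unsplit is what guarantees that after base change and normalization the restricted family $S\to T$ has \emph{irreducible} fibers: if some fiber degenerated, the numerical class of a fiber would split (a reducible fiber $f=f_1+f_2$ with $L\cdot f=0$ need not have $L\cdot f_i=0$, which would break the step $\widetilde F^*L\equiv\pi^*N$). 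With irreducible rational fibers over a smooth curve and a section available, $S\to T$ is a $\PP^1$-bundle and $\operatorname{NS}(S)_\QQ=\QQ\cdot[\sigma]\oplus\pi^*\operatorname{NS}(T)_\QQ$, so the decomposition is legitimate. Second, in the point-image case the image of the fiber might a priori be a multiple cover rather than birational, but this only rescales the class by a positive rational, which is harmless for the proportionality conclusion; the paper's definition of unsplit family (``$F_t$ is birational to its image'') actually rules this out. With those two remarks spelled out, your proposal is a correct reconstruction of the cited result.
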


\subsection{Covering family of rational curves}
The following lemma is an easy consequence of Kawamata's famous result \cite{ka91}:
\begin{Lem}\label{cover1}
Let $X$ be a Fano variety of dimension $n$ with at worst quotient singularities. Then   there is a covering family of rational curves with $-K_X$-degree at most $2n$.
\end{Lem}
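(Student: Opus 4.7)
The plan is to apply Kawamata's theorem on the length of extremal rational curves on klt pairs \cite{ka91}, and then upgrade the resulting extremal rational curve to an honest covering family via uniruledness and a bend-and-break minimality argument.

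Since quotient singularities are klt, the pair $(X,0)$ is klt; since $-K_X$ is ample, every extremal ray of $\overline{NE}(X)$ is $K_X$-negative, so by Kawamata's cone theorem the Mori cone is rationally polyhedral. The bound of Kawamata \cite{ka91} then says that each extremal ray $R\subset\overline{NE}(X)$ is generated by the class of a rational curve $\ell_R$ with $0<-K_X\cdot\ell_R\le 2n$. This already supplies rational curves of degree $\le 2n$; what remains is to exhibit one whose deformations cover $X$.

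To that end, I would use the fact that a Fano klt variety is rationally connected (by the theorem of Hacon--McKernan and Zhang), hence uniruled, so some family of rational curves dominates $X$. Among all such dominating families $V$, choose one whose general member $C$ has minimal $-K_X$-degree, and let $\alpha=[C]\in\overline{NE}(X)$. I would then argue, by bend-and-break applied to $C$ with one general point of $C$ held fixed, that $\alpha$ must actually lie on an extremal ray $R$ of $\overline{NE}(X)$ (otherwise $\alpha$ splits in the cone and the resulting degeneration produces dominating subfamilies of smaller degree, contradicting minimality), and that the associated contraction $\phi_R\colon X\to Y_R$ must be of fiber type (otherwise $\mathrm{Locus}(R)$ is a proper subset of $X$, again contradicting that deformations of $C$ dominate $X$). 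Combining this with $-K_X\cdot\ell_R\le 2n$ gives the desired covering family, since deformations of $\ell_R$ sweep out the general fiber of $\phi_R$ and these fibers fill up $X$.

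The main obstacle is the minimality / bend-and-break step --- formally deducing that the minimal-degree dominating class is concentrated on a single extremal ray whose contraction is of fiber type. I expect to handle this by lifting to the covering stack $\mX$ of Notation \ref{stack} and combining Lemma \ref{dimoflocus} and Lemma \ref{1twisted} with Mori's classical bend-and-break applied on the coarse space of a resolution, which is essentially the mechanism behind Kawamata's $2n$ bound to begin with.
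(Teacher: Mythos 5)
The paper does not actually spell out a proof of this lemma; it simply records it as a consequence of Kawamata's bend-and-break argument (resolution of singularities, reduction mod $p$, and breaking through a general point), which directly produces, through a general point of a klt Fano $X$, a rational curve of $-K_X$-degree at most $2n$. Your route through extremal rays and the Mori fibre structure is genuinely different, and in its present form it does not close.

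The main gap is the last step. Kawamata's theorem produces, for each $K_X$-negative extremal ray $R$, \emph{some} rational curve $\ell_R$ with $[\ell_R]\in R$ and $-K_X\cdot\ell_R\le 2n$; it gives no control over the deformation space of that particular curve. The assertion that ``deformations of $\ell_R$ sweep out the general fiber of $\phi_R$'' is unjustified: $\ell_R$ may be rigid, may lie in a special (even singular) fiber, or may sweep out only a proper subvariety of the general fiber. What covers the general fiber $F$ is the \emph{minimal covering family of $F$}, and its class in $R$ could well be a nontrivial multiple of $[\ell_R]$, so no degree bound transfers. If $F$ is smooth you could invoke Mori's $\dim F+1$ bound on $F$, but in Lemma~\ref{cover1} the singularities are not assumed isolated, so the general fiber can be singular klt Fano and you would need the very statement being proved, one dimension down --- an induction you have not set up, and one that in any case breaks entirely when $\rho_X=1$ (then $\phi_R$ maps to a point, $F=X$, and the argument is circular).

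A secondary gap is the minimality/bend-and-break step. Bend-and-break with one marked point fixed shows that a minimal-degree dominating family cannot \emph{geometrically} break; this is ``unsplitness,'' which is weaker than lying on an extremal ray, and you need care to see that a broken-off component still gives a dominating family (this requires a Noetherian/constructibility argument as the base point varies). More to the point, even if you establish $\alpha\in R$, that alone does not bound $-K_X\cdot\alpha$ by $2n$: Kawamata bounds $-K_X\cdot\ell_R$, not $-K_X\cdot\alpha$. The degree bound must come from a dimension estimate for the family of curves in $\alpha$ through one or two fixed points, valid in the klt setting --- and that estimate (via a resolution and the discrepancy terms, or via the smooth covering stack as in Lemma~\ref{def}) \emph{is} Kawamata's argument. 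In other words, the detour through the cone structure does not buy you the bound; the bend-and-break step you flag as ``the main obstacle'' is the whole content, and it is cleaner to apply it directly to the minimal covering family of $X$ rather than routing through $\ell_R$ and the Mori fibration.

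Finally, a minor remark: invoking Hacon--McKernan--Zhang for rational connectedness is much heavier than needed; uniruledness of $X$ is already implicit in the bend-and-break machinery you would have to run anyway to obtain the $2n$ bound.
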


We have the following  stack version of Lemma~\ref{cover1}.
\begin{Lem}\label{cover2}
Let $X$ be a Fano variety of dimension $n$. Assume that $X$ has at worst quotient singularities.  Let $\mX \to X$ be the smooth covering
stack as in Notation~\ref{stack}. Then  there is a covering family of twisted rational curves  with $-K_{\mX}$-degree at most $2n$.
\end{Lem}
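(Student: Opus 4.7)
The plan is to bootstrap directly from Lemma~\ref{cover1}: produce a covering family of rational curves on the coarse space $X$ of degree $\leq 2n$, and then lift each member of this family to a twisted rational curve on $\mX$ without changing the anti-canonical degree.

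First I would invoke Lemma~\ref{cover1} to obtain a covering family of rational curves $\{C_t\}_{t \in T}$ on $X$ with $C_t \cdot (-K_X) \leq 2n$ for every $t$; we may assume $T$ is irreducible and the map $C \to X$ from the total space is dominant (otherwise restrict to a component). Next, for a general $t \in T$, the curve $C_t$ meets the smooth locus $X_{\mathrm{reg}}$ (since the family covers $X$ and $X_{\mathrm{sing}}$ is a proper closed subset), so $C_t$ is not entirely contained in $X_{\mathrm{sing}}$; in particular $C_t$ meets the isomorphism locus of $\pi\colon \mX\to X$.

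Then I would apply the standard lifting result (Lemma 7.2.5 of \cite{av02}, as already used in the discussion preceding Lemma~\ref{1twisted}) to each $C_t \to X$: after endowing $C_t$ with an appropriate orbifold structure at the points mapping into $X_{\mathrm{sing}}$, one obtains a representable morphism $\mC_t \to \mX$ from a twisted rational curve whose coarse map is $C_t \to X$. Working in families, one produces a family of twisted stable maps $\mC \to \mX$ parametrized by (a modification of) $T$. By the degree identity recorded in Notation~\ref{stack}, namely $\mC_t \cdot K_{\mX} = C_t \cdot K_X$, we get $\mC_t \cdot (-K_{\mX}) \leq 2n$.

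Finally I would verify the covering property for the lifted family: since $\pi\colon \mX \to X$ is an isomorphism over the dense open substack $\pi^{-1}(X_{\mathrm{reg}}) \cong X_{\mathrm{reg}}$, and the original family covers $X_{\mathrm{reg}}$, the images of the lifts cover $\pi^{-1}(X_{\mathrm{reg}})$; taking closure yields a subset of $\mX$ containing a dense open substack, hence all of $\mX$ by properness of the image. The only subtle point I foresee is ensuring that the lifting can be performed uniformly in families (as opposed to curve by curve), but this is the content of \cite{av02}: working over an \'etale cover of $T$ one lifts and then descends, and any failure of uniformity on a proper closed subset can be discarded since we only need a covering family through the generic point of $T$.
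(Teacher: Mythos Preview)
Your proposal is correct in spirit and reaches the goal, but it diverges from the paper's argument in a way worth noting, and the point you flag as ``subtle'' is precisely where the paper takes a cleaner route.

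You lift a \emph{specific} covering family from Lemma~\ref{cover1} curve by curve and then argue that the lifts assemble into a family over (an \'etale modification of) $T$. You correctly identify the delicate step --- lifting uniformly in families --- but your justification (``this is the content of \cite{av02}'') is not quite accurate: \cite{av02} constructs the moduli of twisted stable maps and proves its properness, and Lemma~7.2.5 there lifts a \emph{single} map; it does not directly say that a family over a base lifts to a family of twisted maps over the same (or a modified) base. Your argument can be repaired by restricting to a dense open of $T$ where the intersection pattern of $C_t$ with the stacky locus is constant and then spreading out, but this needs to be said.

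The paper sidesteps this entirely. Rather than lifting the family from Lemma~\ref{cover1}, it works directly with the moduli stacks $\mK_{0,k}(\mX,d)$ and their universal families $\mU_{k,d}\to\mX$. Surjectivity of the forgetful map $\coprod_k \mK_{0,k}(\mX,d)\to \mK_{0,0}(X,d)$ (which \emph{is} the content of the lifting lemma, applied pointwise) together with Lemma~\ref{cover1} gives $\bigcup_{k,d\leq 2n}(\pi\circ F_{k,d})(\mU_{k,d})=X$; since $X$ is irreducible and each image is closed, one pair $(k,d)$ already covers $X$. Pulling back the universal family along an \'etale cover $V\to\mK_{0,k}(\mX,d)$ by a projective scheme and passing to an irreducible component yields the desired family. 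The trade-off: your approach is more hands-on and keeps track of the original family, while the paper's approach outsources the ``family'' issue to the existence of the universal family over the moduli stack, which is slicker but less explicit.
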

\begin{proof}
Let $r$ be the natural number such that $r K_X$ is Cartier as in Notation~\ref{stack} and  
$d>0$ be any rational number such that $r d \in \NN_{>0}$. 
Consider the morphisms $\coprod _{k \in \NN} \mK_{0,k}(\mX, d) \to \coprod _{k \in \NN} \mK_{0,k}(X, d)$ (forgetting the stack structures) 
and  $\coprod _{k \in \NN} \mK_{0,k}(X, d) \to \mK_{0,\;0}(X,d)$ (forgetting marked points and then  stabilizing).
Note that the composition  $\coprod _{k \in \NN} \mK_{0,k}(\mX, d) \to   \mK_{0,\;0}(X,d)$ is surjective since  
we can always lift a stable map $C \to X$ to $\mX$  by adding suitable stack structures 
on finitely many points on $C$. Consider the universal family $F_{k,\;d}: \mU_{k,\;d} \to \mX$ over  $\mK_{0,k}(\mX, d)$. 
It follows  that $$\cup_{\{(k,d)|k \in \NN,\; d \leq 2n,\; r d \in \NN\}}(\pi \circ F_{k, \;d}) (\mU_{k,\;d}) = X$$
 since $X$ is covered by rational curves with degree at most $2n$. Since $X$ is Noetherian and irreducible,  we 
have  $(\pi \circ F_{k,\;d}) (\mU_{k,\;d})=X $ for some $(k,d)$.

Since $\mK_{0,k}(\mX,d)$ is a proper Deligne-Mumford stack with projective coarse moduli space, 
we can find an \'etale surjective morphism $V \to \mK_{0,k}(\mX,d)$ such that $V$ is projective.

Pulling back  the universal family of twisted stable maps
over $\mK_{0,k}(\mX,d)$ to $V$  yields  a covering family of twisted stable maps over $V$.
 
Taking a suitable irreducible component of $V$, we obtain a covering family over an irreducible projective scheme.

\end{proof}

\section{Proof of Theorem~\ref{mainthm}}
We will assume that  $X$ is a Fano variety of dimension $n$ with at worst isolated quotient singularities throughout  this section. 
Let $\pi: \mX \to X$ be the covering stack of $X$ as in Notation~\ref{stack}. 

We start with the following important  lemma. 
\begin{Lem}\label{divisorlemma} 
Consider 
$$\begin{CD}
\mC @>F>>  \mX \\
@V{\pi_\mC}VV @V{\pi}VV \\
C @>\Fbar>> X \\
@V{\Gbar}VV \\
T
\end{CD}$$
 an unsplit  family of   twisted stable maps over an irreducible projective scheme  $T$. 
 Assume that (1) the source curve $\mC_t$  has at most $1$ twisted point for a general $t \in T$, (2)
the image $\Fbar (C)$  
 contains a divisor in $X$ and (3)  $i_X > \frac{n}{2}+1$.  Then  $\rho_X=1$.
\end{Lem}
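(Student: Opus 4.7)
The plan is to argue by contradiction, assuming $\rho_X \geq 2$. Since $V$ is unsplit, the class $[C_V]$ spans an extremal ray $R \subset \overline{NE}(X)$, and the assumption $\rho_X \geq 2$ furnishes a second extremal ray $R' \neq R$. Applying Lemma~\ref{1twisted} to $R'$ produces a twisted rational curve $f' : \mC' \to \mX$ with class in $R'$, at most one twisted point, and $i_X \leq \mC' \cdot (-K_\mX) \leq n+1$. Following the construction in Subsection~\ref{1twistedcurve}, I would embed $f'$ into an unsplit family $V'$ of $1$-pointed twisted stable maps whose marked point is pinned at a fixed $p' \in X$ (the image of the unique twisted point, forced to a singular point because $X$ has only isolated singularities, or a freely chosen point when $\mC'$ is smooth). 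The analogous construction applied to the given family $V$, with a marked point sent to a chosen $p \in \Fbar(C)$, produces an analogous unsplit $1$-pointed subfamily.

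Lemma~\ref{dimoflocus} then applies to both families. The hypothesis that a general $\mC_t$ carries at most one twisted point is essential here: by Remark~\ref{age} the single age contribution in Lemma~\ref{def} is bounded by $n$, and Lemma~\ref{dimoflocus} yields
\begin{equation*}
\dim Locus(V)_p \;\geq\; i_X - 1 \;>\; \tfrac{n}{2}, \qquad \dim Locus(V')_{p'} \;\geq\; i_X - 1 \;>\; \tfrac{n}{2}.
\end{equation*}
Combined with the hypothesis $\dim \Fbar(C) \geq n-1$, this gives the dimension sum $\dim Locus(V')_{p'} + \dim \Fbar(C) > n$.

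The heart of the argument is to convert this numerical inequality into an actual meeting point $q \in Locus(V')_{p'} \cap \Fbar(C)$ through which a $V'$-curve can be placed inside $Locus(V)_q$. To this end I would analyze the Mori contraction $\varphi_R : X \to Y$ of $R$: if $\dim Y = 0$ then $\rho_X = 1$ is immediate, so assume $\dim Y \geq 1$. In the fiber-type case, the minimal class $[C_V]$ of $R$ covers general fibers, so $\Fbar(C) = X$ and the intersection is trivial; in the divisorial case, $\Fbar(C)$ must coincide with the exceptional divisor of $\varphi_R$ (since $V$-curves are contracted), and one uses the numerical behavior of this divisor against non-contracted $R'$-curves to force them to meet $\Fbar(C)$. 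Once a meeting point $q$ is produced, Lemma~\ref{boundpicardnumber2} applied to $V$ at $q$ forces every curve contained in $Locus(V)_q$ to have class proportional to $[C_V] \in R$; a positioning argument then exhibits a curve of class in $R'$ inside $Locus(V)_q$, contradicting $R' \neq R$.

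The main obstacle is the intersection step, since in general two subvarieties of a projective $n$-fold with dimensions summing to more than $n$ need not meet. The argument must therefore exploit the specific divisorial geometry of $\Fbar(C)$ under the Mori contraction $\varphi_R$ — most crucially the numerical behavior $\Fbar(C) \cdot C'$ for $C'$ in the other extremal ray $R'$ — rather than pure dimension counting, in order to secure both the meeting point $q$ and the containment of a $V'$-curve inside $Locus(V)_q$.
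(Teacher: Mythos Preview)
Your proposal has a genuine gap at the very first step: the assertion that ``since $V$ is unsplit, the class $[C_V]$ spans an extremal ray $R$'' is not valid. Unsplitness (no nontrivial decomposition $[C_V]=\sum a_i[C_i]$ with $a_i\in\NN_{>0}$) does not imply extremality in $\overline{NE}(X)$; a primitive class can sit in the interior of a face. Since your entire argument hinges on the Mori contraction $\varphi_R$ of $R$, and that contraction only exists when $R$ is extremal, the proof does not get off the ground.

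The paper handles exactly this issue, and it is the real content of the lemma. Rather than assuming $\beta:=[C_V]$ is extremal, the paper first picks a divisor $D\subset\Fbar(C)$, then chooses an extremal class $\alpha$ with $\alpha\cdot D>0$ (this exists by decomposing any curve meeting $D$ transversally), produces via Lemma~\ref{1twisted} the locus $W_1$ with $\dim W_1>\tfrac{n}{2}$ and all curves in $W_1$ proportional to $\alpha$, and then \emph{proves} $\RR_{\geq 0}\alpha=\RR_{\geq 0}\beta$ by intersecting $W_1$ with either $D$ (if $\mC_t$ has a twisted point, so $D$ itself is a $Locus(V)_p$) or a second locus $W_2$ of $V$-curves through a chosen point (if $\mC_t$ is untwisted). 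Only after establishing that $\beta$ is extremal does the contraction $\phi_\alpha$ enter.

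Your endgame is also too vague and differs from the paper's. You aim for a direct contradiction by placing an $R'$-curve inside $Locus(V)_q$, but you never say how; Lemma~\ref{boundpicardnumber2} only constrains curves already lying in $Locus(V)_q$, it does not put $R'$-curves there. The paper instead argues that $D\subset Exc(\phi_\alpha)$, rules out the divisorial case by a self-intersection computation on a surface cut from $D$ (yielding $\alpha\cdot D<0$, contradicting $\alpha\cdot D>0$), and in the fiber-type case with $\dim Y>0$ invokes a \emph{third} extremal ray $\gamma$ not contracted by $\phi_\alpha$: its locus $W_3$ has $\dim W_3>\tfrac n2>\dim Y$, yet $\phi_\alpha|_{W_3}$ is finite (else a fiber curve in $W_3$ would force $\gamma\in\RR_{\geq 0}\alpha$), which is absurd.
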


\begin{proof}
First note that  $\mC_t$  has at most $1$ twisted point for any $t \in T$  since the family is unsplit. 
Consider  the closed subset $V= \Fbar(C)$, the image of the family of twisted stable maps in $X$.
Since $V$ is the image of an irreducible scheme under a projective morphism, it is closed and irreducible. Endow the reduced 
scheme structure on $V$. 
If $V=X$, take $D$ be any irreducible divisor on $X$.
If $V$ is a divisor, take $D$ be any prime divisor contained in $V$.

 Pick a general $t \in T$ and consider the twisted stable map $F_{t}: \mC_{t} \to \mX$.  
Set $\beta=(\pi \circ F_{t})_{\; *} [\mC_{t}]=\Fbar_{t \; *}[C_{t}] \in H_2(X)$.

Recall that $X$ is $\QQ$-factorial. Let $E$ be any curve which intersects $D$ but is  not contained in $D$. It is clear that $E \cdot D >0$.
Write $[E]= \sum a_i [E_i]  $ as a positive combination of extremal rays.  We have $E_i \cdot D >0$  for some $i$. May assume that $E_1 \cdot D >0$. 
Let $\alpha:= [E_1]$.    
By Lemma~\ref{1twisted}, we can find a twisted rational curve $f: \mE \to \mX$ with at most one twisted point such 
that $(\pi \circ f)_{*}[\mE] \in \RR_{\geq 0}\; \alpha$ 
and $\mE \cdot f^{*}(-K_{\mX}) \leq n+1$.  Note that $f(\mE)$ is unsplit since $i_X >\frac{n}{2}+1$.

Note that $\mE \cdot (\pi \circ f)^{*} D >0$ and therefore $f(\mE)$ meets $\pi^{-1}(D)$. 
Replacing   $\alpha$ by  the class $(\pi \circ f) (\mE)$   if necessary (they are two classes in the same ray), we may assume that $[(\pi \circ f) (\mE)]=\alpha \in H_2(X)$.

As in Section~\ref{1twistedcurve}, we can view $f: \mE \to \mX$  as an element of $\mK_{0,1}(\mX, \alpha)$.  Consider  an irreducible component $[f] \in \mM \subset \mK_{0,1}(\mX, \alpha)$ and its universal family of twisted $1$-pointed stable maps  as in Section~\ref{1twistedcurve}. 
Denote by $W_1$ the image of 
this family in $X$. 
 Note that for this family of 
twisted $1$-pointed stable maps the image of 
the marked point is fixed.   

By Lemma~\ref{boundpicardnumber2}, it follows that for any curve 
$C \subset W_1$, its class $[C]$ lies in $\RR_{\geq 0} \;\alpha$. We also 
have $dim\; W_1 \geq \mE \cdot (-K_{\mX}) -1 >\frac{n}{2}$ by Lemma~\ref{dimoflocus}.  

We proceed by showing that the classes $\alpha$ and $\beta$ lie in the same extremal ray. 
We then  show that $\RR_{\geq 0} \; \alpha$ is the only extremal ray.

Divide into two cases: (1) the source curve $\mC_t$ for a general $t \in T$ does have a twisted point, 
and (2) the source curve $\mC_t$  for a general $t \in T$ does not have any twisted point, i.e. $\mC =C$ is a scheme. \newline

Case (1): The source curve $\mC_t$  has one twisted point.

Note that $D$ is covered by a family of rational curves  through a specified point (the image of the twisted point). By Lemma~\ref{boundpicardnumber2} again, it follows that for any curve 
$C \subset D$, its class $[C]$ lies in $\RR_{\geq 0}\; \beta$. Consider $W_1 \cap D$. Since the intersection is non-empty, any irreducible component of $W_1 \cap D$  
has dimension at least $dim W_1 + dim D -n \geq 1$. Therefore, we can find a  curve $C_1 \subset D$ and $C_1 \subset W_1$. 
It follows that $\RR_{\geq 0} \; \alpha =\RR_{\geq 0} \; \beta$. \newline

Case (2): The source curve $\mC_t$ has no twisted point.

Pick any $t_0 \in T$ such that $W_1$ meets  $(\pi \circ F_{t_0})(\mC_{t_0})$. Pick any point on $\mC_{t_0}$, 
say  $\infty  $, and consider the morphism $F_{t_0}: \mC_{t_{0}} \to \mX$ with the marked point $\infty  \in \mC_{t_0}$ as a twisted $1$-pointed stable 
map as in Section~\ref{1twistedcurve}. We  still denote this element by $[F_{t_0}]$.  
Let  $\mM \subset \mK_{0,1} (\mX, \beta, F_{t_0}|_{\infty})$ be an irreducible component which contains 
$[F_{t_0}]$. Consider the universal family of twisted $1$-pointed stable maps over $\mM$ and the image  of this family in $X$. 
Denote the image  by $W_2$. 
Note that for any curve $C \subset W_2$, we have  $[C] \in \RR_{\geq 0} \; \beta$  by Lemma~\ref{boundpicardnumber2}.
By Lemma~\ref{dimoflocus}, we have  $dim W_2  \geq \mC_{t_{0}}  \cdot F_{t_0}^{*} (-K_{\mX})-1 >\frac{n}{2}$.    
Since $W_1 \cap W_2$ is non-empty,  it follows that
$$dim (W_1 \cap W_2) \geq  dim W_{1}+ dim W_{2} -n  \geq 1.$$ Therefore we can find a curve 
$C_1 \subset W_1 \cap W_2$. Again, we have  $\RR_{\geq 0} \; \alpha =\RR_{\geq 0} \; \beta$.
\newline

Consider the extremal ray $\RR_{\geq 0}\; \alpha$ and the corresponding extremal  contraction $\phi_{\alpha}:X \to Y$.
We will prove that $Y$ is a point (and hence $\rho_X=1$).

Since  $\RR_{\geq 0} \;\alpha =\RR_{\geq 0} \;\beta$, for every $t \in T$ the curve $(\pi \circ F_{t})(\mC_{t})$  is contracted by $\phi_{\alpha}$. Therefore $D \subset Exc(\phi_{\alpha})$. The morphism $\phi_{\alpha}$ is either a divisorial contraction or a contraction of fiber type.  
\newline

We first show that $\phi_{\alpha}: X \to Y$ can not be divisorial:
Assume the contrary. Let $A$ be a very ample divisor on $Y$ and $M$ a very ample divisor on $X$.
Set $$C':= D \cap_{i=1}^{dim \phi_{\alpha}(D)} L_i \cap_{j=dim \phi_{\alpha}(D) +1}^{n-2} M_j$$ where
$L_i$'s are the general members of $|\phi_{\alpha}^{*} A|$  and $M_j$'s the general members of $|M|$.  
Let $S= \cap_{i=1}^{dim \phi_{\alpha}(D)} L_i \cap_{j=dim \phi_{\alpha}(D) +1}^{n-2} M_j$. 
Note that $C'$ is contracted by $\phi_{\alpha}$. 
Therefore, it is a (positive) multiple of $\alpha$. 
Note that $C' \cdot D=  (C')^{2}_{S} <0$ since $C'$ is a contracted curve on the surface $S$. 
Thus $\alpha \cdot D <0$. This gives a contradiction. \newline

Now we show that $Y$ is a point.  
Assume by contradiction that  $dim \;Y>0$. 
Let $y \in Y$ be a general point. The fiber $X_y$  is a smooth Fano variety. Note that 
$K_X|_{X_{y}}$ is numerically equivalent to $K_{X_{y}}$. 
By Mori's bend and break, there is a curve $C' \subset X_y$ such that $$C' \cdot (-K_{X_{y}}) \leq dim X_y +1.$$

Note that  $$C' \cdot ( -K_{X_{y}})= C' \cdot (- K_{X} )>  \frac{n}{2}+1$$ by our assumption. 
Combining these  two inequalities yields  $dim X_y >  \frac{n}{2}$.
Thus $dim Y <  n- \frac{n}{2}=\frac{n}{2} $.

Let $\RR_{\geq 0} \; \gamma$ be an extremal ray which is not contracted by $\phi_{\alpha}$.
By Lemma~\ref{1twisted}, there is a twisted stable map $g: \mE' \to \mX$ such that (1) $\mE'$  has at most $1$ twisted point, (2) the intersection number  $\mE' \cdot (-K_{\mX}) \leq n+1$, and (3) $(\pi \circ g)_*[\mE'] \in \RR_{\geq 0} \; \gamma$. We may assume $(\pi \circ g)_*[\mE']= \gamma$.

Again, we  view $g:  \mE' \to \mX$ as an element of $\mK_{0,1}(\mX, \gamma)$ as in Section~\ref{1twistedcurve}.
Take an irreducible component  $ \mM \subset \mK_{0,1}(\mX, \gamma)$ which contains $[g]$ and consider the universal family of twisted stable maps  over $\mM$ as in
Section~\ref{1twistedcurve}. Denote by $W_3$ the image of 
this family in $X$. 
For this family of twisted $1$-pointed stable maps, the image of 
the marked point $\infty$ is also fixed.  
%

Note that if $C'' \subset W_3$ is any curve, then its class is a multiple of $\gamma$ by Lemma~\ref{boundpicardnumber2}. 
By Lemma~\ref{dimoflocus}, we have $dim\; W_3 >  \frac{n}{2}$ and hence $dim\; W_3 > dim\; Y$. 
For any $y \in Y$,  consider the intersection $W_3 \cap X_y $. 
It is easy to see that that $dim\;  W_3 \cap X_y <1$; otherwise, we would 
have $\RR_{\geq 0} \; \alpha= \RR_{\geq 0} \; \gamma$.

It follows that the morphism  $\phi_{\alpha}|_{W_{3}}: W_3 \to Y$ is finite to its image. This is absurd since $dim \; W_3 > dim\; Y$.

\end{proof}

\begin{notation}\label{coveringfamily}
Let  $$\begin{CD}
\mC @>F>>  \mX \\
@V{G}VV  \\
T \\
\end{CD}$$
be a covering family of twisted stable maps over an irreducible and projective scheme $T$. 
Pushing forward to the coarse spaces, we have 
$$\begin{CD}
C @>\Fbar>>  X \\
@V{\Gbar}VV  \\
T \\
\end{CD}$$
a covering family of stable maps into $X$.
Let $k$ be the number of twisted points on the source curve   $\mC_t$   for a general $t \in T$.  
Let $d= \mC_{t} \cdot F_{t}^*( -K_{\mX})$.   
By Lemma~\ref{cover2}, we may assume that $d \leq 2n$.  Choose a covering family with the smallest possible anti-canonical degree. 
We fix this  covering family of anti-canonical degree $d  \leq 2n$ in  Proposition~\ref{2twistedpoints} 
and Proposition~\ref{1point}. 
 
\end{notation}

We proceed by dividing into two cases according to the number of twisted points on a general source curve: (1) $k \geq 2$, and (2) $k=0$ or $1$. 
Proposition~\ref{2twistedpoints} takes care of the first case and   Proposition~\ref{1point} takes care of the second. 
\newline

Case (1): $k \geq 2$.

\begin{Prop}\label{2twistedpoints} 
Let $F: \mC \to \mX$ be the covering family of twisted stable maps as in Notation~\ref{coveringfamily}.
Assume that $\mC_t$ has at least two twisted points for a general  $t \in T$. Also assume that $i_X > max \{\frac{n}{2}+1, \frac{2n}{3}\}$. 
Then  $\rho_X=1$.
\end{Prop}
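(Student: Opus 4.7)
Plan: I will produce an unsplit family of twisted rational curves with source carrying at most one twisted point, whose locus in $X$ contains an irreducible divisor, and then apply Lemma~\ref{divisorlemma}.

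\textbf{Setting up the fixed points.} On an open dense subset of $T$, every source curve $\mC_t$ has exactly $k\ge 2$ twisted points, and the image of each twisted point is a constant element of the finite set $\Sing X$; this is because such images must land in $\Sing X$ and hence can only vary within a discrete set. Pick two such images $p_1,p_2$ and regard the family as a 2-pointed family, lying in an irreducible component $\mK\subset\mK_{0,2}(\mX,\beta;p_1,p_2)$, where $\beta$ is the common class. Since the universal curve over this component surjects onto $X$, $\dim\mK\ge n-1$.

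\textbf{Bend-and-break produces unsplit components.} As $\dim\mK\ge 1$, Mori's two-point bend-and-break forces $\overline{\mK}$ to contain a reducible stable map with source $C_1\cup_q C_2$, $p_i\in C_i$; more precisely $\dim\partial\mK\ge n-2$. Fix an irreducible boundary stratum $D_{\beta_1,\beta_2}$ of this dimension. Setting $d_i=\beta_i\cdot(-K_\mX)$, Notation~\ref{coveringfamily} gives $d=d_1+d_2\le 2n$, and in the main case $d_1,d_2>0$ each $d_i\ge i_X>2n/3$. Therefore $d_i\le d-i_X<4n/3<2i_X$, so each $C_i$ is unsplit, and the inequality $3i_X>2n\ge d$ rules out three or more positive-degree pieces. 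The case of a contracted component reduces to this by iteration.

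\textbf{Dimension count yields a divisor.} Let $M_i\subset\mK_{0,1}(\mX,\beta_i;p_i)$ be the irreducible component through $[C_i]$, with universal curve $\mU_i$ and locus $W_i\subset X$. The gluing description $D_{\beta_1,\beta_2}\cong\mU_1\times_\mX\mU_2$ and the bound $\dim D_{\beta_1,\beta_2}\ge n-2$ yield $\dim\mU_1+\dim\mU_2\ge 2n-2$; coupled with the fact that unsplit-ness forces $\mU_i\to\mX$ to be generically finite onto $W_i$ (so $\dim W_i=\dim\mU_i=\dim M_i+1$), this gives $\dim W_1+\dim W_2\ge 2n-2$. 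Minimality of the covering degree $d$ from Notation~\ref{coveringfamily} forbids $W_i=X$, for otherwise $M_i$ would be a covering family of strictly smaller degree $d_i<d$; hence $\dim W_i\le n-1$. Combining, $\dim W_1=\dim W_2=n-1$, so each $W_i$ is an irreducible divisor.

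\textbf{Conclusion and main obstacle.} When $k=2$, the two marks remain on their assigned components and a generic node of the degeneration is non-stacky (it lies over $X\setminus\Sing X$), so each $C_i$ has exactly one twisted point; for $k>2$ one iterates the bend-and-break on $C_1$ to peel off surplus twisted points, the process terminating because each step drops degree by at least $i_X>2n/3$ while each component must still have degree $\ge i_X$. Either way $M_1$ is an unsplit family satisfying the hypotheses of Lemma~\ref{divisorlemma}, and $\rho_X=1$ follows. The central difficulty is the dimension count: one must justify that an individual boundary stratum (not merely the full boundary) attains $\dim\ge n-2$, that the fibered product $\mU_1\times_\mX\mU_2$ saturates the bound $\dim\mU_1+\dim\mU_2-n$ (a transversality/generic-finiteness input coming from unsplit-ness), and that the upper bound $\dim W_i\le n-1$ really applies despite possible contracted components; the $k>2$ iteration is straightforward in principle but requires careful bookkeeping of twisted-point counts against degree decrements.
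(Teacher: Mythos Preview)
Your route differs from the paper's and carries a genuine gap at the dimension count, which you yourself flag as the ``central difficulty'' without resolving.

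The problematic step is the inference $\dim D_{\beta_1,\beta_2}\ge n-2\ \Rightarrow\ \dim\mU_1+\dim\mU_2\ge 2n-2$. For this you need $\dim(\mU_1\times_\mX\mU_2)\le\dim\mU_1+\dim\mU_2-n$, but the standard bound for a fibre product over a smooth $n$-dimensional base is the \emph{opposite} inequality $\dim(\mU_1\times_\mX\mU_2)\ge\dim\mU_1+\dim\mU_2-n$; excess intersection is not excluded. Generic finiteness of $\mU_i\to W_i$ (which does follow from unsplit-ness, as you note) at best yields $\dim(\mU_1\times_\mX\mU_2)\le\min(\dim\mU_1,\dim\mU_2)$, and even that only over the locus where the evaluation maps are finite. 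From this you get $\dim W_i\ge n-2$, one short of the divisor you need, and the minimality bound $\dim W_i\le n-1$ does not close the gap. Your iteration for $k>2$ also does not work as written: once a piece is unsplit, bend-and-break cannot split it further, so you cannot ``peel off'' additional twisted points.

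The paper sidesteps the moduli dimension count entirely. It proves directly that the image in $X$ of the degeneration locus contains a divisor, by the following geometric trick: if that image had codimension $\ge 2$, a general complete-intersection curve $E\subset X$ would avoid it; pull $E$ back to a curve in $T$, and the family over that curve still has two twisted marks with fixed images, so bend-and-break forces a degeneration over it --- contradicting avoidance. With the divisor in hand, the paper then observes that an unsplit piece carrying $\ge 2$ twisted points cannot move at all (bend-and-break would split it, impossible), so if both pieces had $\ge 2$ twisted points the degeneration image would be one-dimensional, not a divisor. Hence at least one piece has $\le 1$ twisted point, its family still sweeps out a divisor, and Lemma~\ref{divisorlemma} applies. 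This last rigidity observation is also what replaces your iteration in the $k>2$ case.
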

\begin{Rem}
The condition $i_X > \frac{2n}{3}$
 implies that  the domain curve $\mC_t$  has at 
most two components which are not contracted. 
\end{Rem}

\begin{proof}
Let $S \subset T$ be any projective curve. Consider the  pull-back family of twisted stable 
maps $F|_{S}: \mC  | _{S} \to \mX$.  
Suppose that the image of this family in $\mX$ is two dimensional, i.e. the twisted curve deforms in $\mX$. 
Recall that the images of twisted points are fixed in this family. 
By Mori's bend and break, 
this family  has to  degenerate.

 Let $\tilde{D} \subset T$ be the loci where the domain curve is not irreducible. 
Consider $(\pi \circ F) ( G^{-1}(\tilde{D})) \subset X$.
 
\begin{cl}\label{codimension}
The closed subset $(\pi \circ  F) ( G^{-1}(\tilde{D}))$ has at least one component of codimension at most $1$.
\end{cl}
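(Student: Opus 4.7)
The plan is to combine a bend-and-break argument with a dimension count, exploiting the hypothesis that each $\mC_t$ carries at least two twisted points.

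Since $X_{\operatorname{sing}}$ is a finite set of isolated points, and each general $\mC_t$ has at least two twisted points whose images in $X$ must land in $X_{\operatorname{sing}}$, I first replace $T$ by a suitable irreducible component on which two of the marked twisted-point sections of $\mC\to T$ map to fixed singular points $p_1,p_2\in X_{\operatorname{sing}}$. Consequently every image curve $\Fbar_t(C_t)$ in the family passes through both $p_1$ and $p_2$.

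The main step applies Mori's bend-and-break. Choose a very general complete curve $S\subset T$, for example a general complete intersection of ample divisors. Since the original family is covering and $S$ is generic, the pullback family $F|_S\colon\mC|_S\to\mX$ has genuinely two-dimensional image in $\mX$. Every member of this one-parameter family passes through the two fixed lifts $\tilde p_1,\tilde p_2\in\mX$ of $p_1,p_2$, so bend-and-break forces some fiber of $\mC|_S\to S$ to be reducible; hence $S\cap\tilde D\neq\emptyset$. Because every sufficiently general complete intersection curve in $T$ meets $\tilde D$, the subset $\tilde D$ cannot have codimension $\geq 2$ in $T$, so $\dim\tilde D\geq\dim T-1$ and $\dim G^{-1}(\tilde D)\geq\dim T$.

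To finish, use that the covering property forces $\dim T+1\geq n$ and that the surjection $\pi\circ F\colon\mC\to X$ has general fiber dimension $\dim T+1-n$. The standard fiber-dimension inequality then yields
$$\dim(\pi\circ F)(G^{-1}(\tilde D))\;\geq\;\dim T-(\dim T+1-n)\;=\;n-1,$$
which is the desired codimension bound.

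The main obstacle will be the two genericity statements used above: that for a truly general $S$ the image $F|_S(\mC|_S)\subset\mX$ is two- rather than one-dimensional (ruling out the possibility that the family secretly reparametrizes a one-parameter family of fixed curves), and that in the final count $G^{-1}(\tilde D)$ is not entirely swallowed by the (lower-dimensional) jump locus of $\pi\circ F$ where fibers are larger than generic. Both can be arranged by passing to a suitable irreducible component of $T$ and choosing $S$ sufficiently general.
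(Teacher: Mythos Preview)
Your approach is genuinely different from the paper's, and the difference matters. You first show that $\tilde D$ is a divisor in $T$ (by sweeping with general complete-intersection curves $S\subset T$ and applying bend-and-break), and then attempt to push this forward to $X$ via a fiber-dimension count. The paper reverses the direction: it argues by contradiction, chooses a general ample curve $E\subset X$ disjoint from the purported codimension-$\geq 2$ image, pulls $E$ back to $T$ to obtain a complete curve $E''\subset T$ all of whose fibers meet $E$, and then bend-and-break over $E''$ produces a point of $G^{-1}(\tilde D)$ mapping into $E$---a direct contradiction.

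The gap in your argument is the final inequality
\[
\dim(\pi\circ F)(G^{-1}(\tilde D))\;\geq\;\dim G^{-1}(\tilde D)-(\dim T+1-n).
\]
This subtracts the \emph{generic} fiber dimension of $\pi\circ F$, but what is needed is the fiber dimension of $(\pi\circ F)|_{G^{-1}(\tilde D)}$, which can be strictly larger: nothing prevents $G^{-1}(\tilde D)$ from lying entirely over the jump locus of $\pi\circ F$ (think of an exceptional divisor mapping to a small set). You flag this yourself, but the proposed remedy---``passing to a suitable irreducible component of $T$ and choosing $S$ sufficiently general''---cannot help, because $\tilde D$ is an intrinsic locus that does not move with $S$, and $T$ is already irreducible. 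There is no freedom left to exploit.

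The paper's trick of choosing the test curve in $X$ rather than in $T$ is exactly what circumvents this: by working downstairs, one never needs to compare fiber dimensions at all. If you want to salvage your route, you would need an independent argument that $G^{-1}(\tilde D)$ dominates something $(n-1)$-dimensional in $X$; but the most natural way to do that is precisely the paper's pull-back argument.
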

\begin{proof}[Proof of Claim~\ref{codimension}]
Assume by contradiction that every component of  $(\pi  \circ F) ( G^{-1}(\tilde{D}))$ has codimension at least $2$. 
 
We can  find a curve $E \subset X$ such that $E$ does not meet $(\pi \circ F)( G^{-1}(\tilde{D}))$ and $E$ is not contained in the image of
any fiber of $\mC \to T$ (by taking $E$ the intersection of general ample divisors). Consider $G((\pi \circ F)^{-1}(E)) \subset T$. We can find a (complete) curve $E' \subset  G^{-1}(G((\pi \circ F)^{-1}(E)))$ 
such that $F(E')$ is not contained in the image of any fiber of $\mC \to T$. Take $E'':= G(E')$
 and consider the family  $F|_{E''}: \mC_{E''} \to \mX$. Note that the image of $F|_{E''}: \mC_{E''} \to \mX$ is two dimensional.  By Mori's bend and break, this family has to degenerate, and hence $E'$ has to meet $\tilde{D}$. This gives a contradiction.
\end{proof}

Continue the proof of Proposition~\ref{2twistedpoints}. 
Let $D$ be  an irreducible divisor contained in  $ (\pi \circ F) (G^{-1}(\tilde{D}))$. 
Consider  possible degenerations of twisted  stable maps. By degree reason, only two components 
 of $\mC_t$ are not contracted by $F_t$. Denote these two components by  $\mC^1_t$ and $\mC^2_t$.  Note that  $(\pi \circ F_t)(\mC^1_t)$ and $(\pi \circ  F_t)(\mC^2_t)$ are unsplit.   
If $\mC^i_t , \; i=1,2,$ has two or more twisted points, then its image in $\mX$ can not deform by Mori's bend and break argument. 

Since the locus of degenerate curves in $X$ contains at least a divisor, it follows that at 
least one component of the domain curve, say $\mC^1_{t }$, will have at most one twisted point. 
Thus there is a family of twisted stable maps with at most one twisted point and its image in $X$ contains at least a divisor.
We apply Lemma~\ref{divisorlemma} and  conclude the proof of Proposition~\ref{2twistedpoints}.
\end{proof}

Case (2): $k \leq 1$.

\begin{Prop}\label{1point}
Let  $F: \mC \to \mX$  be the covering family as in Notation~\ref{coveringfamily}.
Assume that $\mC_t$ has at most one twisted point for a general  $t \in T$.
Also assume that $i_X > max \{ \frac{n}{2}+1, \frac{2n}{3} \}$. Then $\rho_{X}=1$.
\end{Prop}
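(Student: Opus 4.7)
The plan is to apply Lemma~\ref{divisorlemma} directly to the covering family of Notation~\ref{coveringfamily}, after minor reductions. Since the family is covering, its image in $X$ is all of $X$ and hence contains any divisor, so hypothesis~(2) of Lemma~\ref{divisorlemma} is automatic; hypothesis~(3) is the standing assumption of the proposition; and hypothesis~(1), that the source curve carries at most one twisted point, is exactly the assumption of Proposition~\ref{1point}. The only nontrivial thing to verify is that the family is unsplit.

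First I would use the minimality of $d$ from Notation~\ref{coveringfamily} to reduce to the case where $F_t$ is birational to its image and $\mC_t$ is irreducible for general $t \in T$. Otherwise, either taking the reduced image (in the multiple-cover case) or restricting to the subfamily swept by a single non-contracted component of $\mC_t$ would yield a covering family of strictly smaller anti-canonical degree, contradicting minimality; here the irreducibility of $X$ forces at least one such component-subfamily to dominate $X$.

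The crucial step is to show that the class $\beta := (\pi \circ F_t)_{*}[\mC_t]$ is unsplit. Suppose for contradiction $\beta = \beta_1 + \beta_2$ with $\beta_i$ nonzero effective. Fix a very general $x \in X \setminus X_{\mathrm{sing}}$ and consider the subfamily of our covering family passing through $x$. By Lemma~\ref{dimoflocus} this subfamily has dimension $\geq d - 1 \geq i_X - 1 > 0$, so Mori's bend-and-break---justified in the twisted setting by Lemma~\ref{def} and Remark~\ref{age}, using that the possible twisted point of $\mC_t$ has a fixed image since $X_{\mathrm{sing}}$ is isolated---produces a reducible degeneration through $x$. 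At least one non-contracted component then has class of strictly smaller $-K$-degree; varying $x$ over $X$ yields a covering family of degree less than $d$, contradicting minimality. With the class now unsplit, Lemma~\ref{divisorlemma} concludes $\rho_X = 1$.

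The hardest step is the bend-and-break reduction above, since the degenerate fiber may acquire additional twisted points on its components. I would handle this by a dichotomy: if any resulting component has $\geq 2$ twisted points, then Proposition~\ref{2twistedpoints} applies to the subfamily containing that component and directly gives $\rho_X = 1$; otherwise each component has $\leq 1$ twisted point, and after selecting a component whose deformations still sweep $X$ we obtain the promised covering family of smaller degree, contradicting the minimality of $d$ and completing the proof.
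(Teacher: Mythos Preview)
Your overall strategy---reduce to an unsplit covering family and invoke Lemma~\ref{divisorlemma}---is exactly what the paper does, and your observation that not-unsplit forces $d\geq 2i_X>n+2$ (hence bend-and-break with two fixed points applies) is implicitly correct. The gap is in your final dichotomy.

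You write that if some component of the degeneration has $\geq 2$ twisted points, then Proposition~\ref{2twistedpoints} applies to ``the subfamily containing that component''. But Proposition~\ref{2twistedpoints} requires a \emph{covering} family whose \emph{general} member has $\geq 2$ twisted points, and no such family is produced here. In fact the opposite is true: a twisted rational curve with $\geq 2$ twisted points mapping into the isolated locus $X_{\mathrm{sing}}$ has both of those image points fixed, so by bend-and-break it cannot move at all once its class is unsplit. Thus for each unsplit class there are only finitely many such curves, and their union over the countably many unsplit classes is a countable union of curves in $X$---certainly not a covering family, so Proposition~\ref{2twistedpoints} is inapplicable.

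The paper exploits exactly this rigidity to eliminate the bad branch of your dichotomy rather than to feed it into Proposition~\ref{2twistedpoints}: one chooses $x$ \emph{very general}, lying outside that countable union. Then any unsplit component of the degeneration that passes through $x$ is forced to have at most one twisted point, and iterating bend-and-break drives the degree down to $\leq n+1$ while keeping $\leq 1$ twisted point through a very general point. Minimality of $d$ then gives $d\leq n+1$, hence unsplitness (since $2i_X>n+2$), and Lemma~\ref{divisorlemma} finishes. Your argument becomes correct once you replace the appeal to Proposition~\ref{2twistedpoints} by this rigidity/very-general-point argument.
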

\begin{proof}

We first prove the following claim:
\begin{cl}\label{lowerdegree}
For  a very general point  $x \in \mX$, there is a twisted rational curve $\mE$ through $x$ such that (1) $\mE$ has at most 
one  twisted point, and (2) $\mE$ has  $-K_{\mX}$-degree at most $n+1$.
\end{cl}

\begin{proof}[Proof of Claim~\ref{lowerdegree}]

Note that there are only countably many unsplit classes $\alpha_{i} \in H_{2}(X), \; i \in I$. 
Consider all possible twisted stable maps such that (1) the source curve is birational to its  image, (2) the source curve has  at least two twisted points,  and (3) the class of the image is one of these $\alpha_{i} $'s. 
For each $\alpha_{i}$ there are only finitely many such twisted stable maps thanks to Mori's bend and break.  
Delete the images of all such twisted stable maps and all twisted points from $\mX$. Denote the resulting  set 
by $U$. 

Let $x \in U$.
Pick  a $t \in T$ such that $x$ lies on the image of $\mC_t$ (under the morphism $F_t$). 
Note that  $x$ is the image of an ordinary point, denoted by $0 \in \mC_t$, since the 
image  of a twisted point  is a twisted point. 
If $\mC_t$ has a twisted point, we denote it by $\infty$. 
 If $\mC$ does not have any twisted point, 
pick any point on $\mC_t -\{0\}$  whose image in $\mX$ is  not $x$. Also denote this point  by $\infty \in \mC_t$.

Suppose that $\mC_t  \cdot  F_{t}^{*}(-K_{\mX}) >n+1$. 
Then  
$$dim_{[F_{t}]} Mor (\mC_{t}, \mX, F_{t}|_{\{ 0,  \infty\}}) >1 $$ 
by Lemma~\ref{def}.

By Mori's bend and break, the family $F: \mC \to \mX$ degenerates at some $t' \in T$.  
By degree reason, there are  only two components of $\mC_{t'}$, say $\mC^1_{t'}$ and $\mC^2_{t'}$, which
 are not contracted by $F_{t'}$.  Note that since $F_{t'}$ is a twisted stable map, every contracted component
needs to have at least three special points (nodes or the original twisted point). It implies that the domain curve $\mC_{t'}$ has either two or three components.

In the first case, the two components $\mC^1_{t'}$ and $\mC^2_{t'}$ intersect at a node which may be a new twisted point.  In the second case, there are three components  $\mC^1_{t'}$,  $\mC^2_{t'}$ and $\mC^3_{t'}$. The contracted component  $\mC^3_{t'}$ has three special points: the original twisted point $\infty$ and 
two nodes $p_1= \mC^1_{t'} \cap \mC^3_{t'}$   and $p_2= \mC^2_{t'} \cap \mC^3_{t'}$. Note that $p_1$ and $p_2$ may be new twisted points.   
By this analysis, we can obtain a curve, say  $F_{t'}:\mC^1_{t'} \to \mX$, such that the  
 image $ F_{t'}(\mC^1_{t'})$ contains  $x$ and  the source curve $\mC^1_{t'}$ has at most two twisted points (the original twisted point $\infty$ and the possibly new twisted point at the node).   
We claim  that $\mC^1_{t'}$ can not have two twisted points: Suppose that $\mC^{1}_{t'}$ does have two twisted points.  
The image of $\mC^1_{t'}$ (under $\pi \circ F_{t'}$) will  be  in $X -U$. This is 
not possible since $x \in U$.

Therefore we obtain a twisted rational curve through $x$ with at most $1$ twisted point and of smaller anti-canonical degree.
Repeat this bend and break process until the anti-canonical degree 
is at most  $n+1$. This concludes the proof of Claim~\ref{lowerdegree}.
\end{proof}

Now we show that the  covering family $F:\mC \to \mX$ has anti-canonical degree $d \leq n+1$.

Assume the contrary, i.e.  $d >n+1$.  
Let $r \in \NN$ be the  natural number such that $r K_X$ is a Cartier divisor as in Notation~\ref{stack}. Note that  $r (C \cdot -K_X) \in \NN_{>0}$ for any curve $C \subset X$. Let  $l$ be any rational number such that $0< l \leq n+1$ and $r l \in \NN_{>0}$.
Consider $\mK_{0,1}(\mX, l)$  and its  universal  family of twisted  stable maps $F_l: \mU_l \to \mX$.   
The  union $\cup _{\{l| 0< l \leq n+1, \; rl \in \NN \} } (\pi \circ F_l)(\mU_l)$ 
is a closed set since it is the  union of finitely many closed set. It is clear that $\cup _{\{l| 0< l \leq n+1, \; rl \in \NN \} } (\pi \circ F_l)(\mU_l)=X$  since through any very general point $x \in \mX$ there is a twisted rational curve with at 
most $1$ twisted point and of anti-canonical degree $l \leq n+1 <d$ by  Claim~\ref{lowerdegree}. 
Since $X$ is Noetherian and irreducible, we have $(\pi \circ F_l)(\mU_l)=X$ for some $l$.  
By the same argument  in Lemma~\ref{cover2}, we have a covering family of degree strictly less than $d$ over an irreducible and projective scheme.  This is not possible since our choice of 
the covering family  $F: \mC \to \mX$ is of the smallest (anti-canonical) degree.  
Thus we have  $d \leq n+1$.

Note that $d \leq n+1$ implies that the covering family is unsplit by degree reason.
Now apply Lemma~\ref{divisorlemma} to conclude the proof.
\end{proof}

\begin{proof}[Proof of Theorem~\ref{mainthm}]
Theorem~\ref{mainthm} now follows easily from Proposition~\ref{2twistedpoints} and Proposition~\ref{1point}.
\end{proof}

The statement $$i_X > \frac{n}{2}+1 \Rightarrow \rho_X=1$$ is equivalent to 
  $$\rho_X \geq 2  \Rightarrow i_X \leq \frac{n}{2}+1.$$ Under an extra assumption, we are able to 
prove the following: 

\begin{Prop}
Let $X$ be a Fano variety with at worst isolated quotient singularities. Assume that $\rho_X \geq 2$ and there is an extremal 
contraction  $\phi: X \to Y$ such that $dim\;Exc(\phi) \geq n-1$.  Then $i_X \leq \frac{n}{2}+1$.

\end{Prop}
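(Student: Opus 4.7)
The plan is to prove the contrapositive: assuming $i_X > \tfrac{n}{2}+1$ together with $\rho_X \geq 2$ and the existence of $\phi$, I will derive a contradiction via Lemma~\ref{divisorlemma}. The hypothesis $\dim \operatorname{Exc}(\phi) \geq n-1$ rules out a small contraction, so $\phi$ is either of fiber type (with $\operatorname{Exc}(\phi) = X$) or divisorial (with $\operatorname{Exc}(\phi)$ a prime divisor); in either case $\operatorname{Exc}(\phi)$ contains a divisor, which will play the role of the divisor $D$ appearing in Lemma~\ref{divisorlemma}.

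The key step is to construct an unsplit family of twisted stable maps $F:\mC \to \mX$ over an irreducible projective base $T$ whose image in $X$ sweeps out $\operatorname{Exc}(\phi)$ and whose generic source curve has at most one twisted point. By Mori theory applied to $\phi$, the positive-dimensional fibers of $\phi$ are covered by rational curves whose classes lie in the contracted extremal ray $R_1$, and the length of $R_1$ bounds the $-K_X$-degree of such curves by $n+1$. Since $i_X > \tfrac{n}{2}+1$ forces $2 i_X > n+1$, every such class is automatically unsplit. The relevant parameter space (Chow or Hilbert scheme) of minimum-degree rational curves in $R_1$ has only finitely many irreducible components, whose universal-family images together cover the irreducible set $\operatorname{Exc}(\phi)$; by irreducibility, at least one such component already sweeps out all of $\operatorname{Exc}(\phi)$. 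Because $X$ has only isolated quotient singularities, the generic member of this family avoids $X\sing$ and lifts to a scheme $\PP^1 \subset \mX$ with no twisted points, giving the desired family $F:\mC \to \mX$ with $0 \leq 1$ twisted points on the generic source curve.

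Applying Lemma~\ref{divisorlemma} to this family (whose three hypotheses—unsplitness, $\leq 1$ twisted point generically, image containing a divisor—are all met, together with $i_X > \tfrac{n}{2}+1$) yields $\rho_X=1$, contradicting $\rho_X \geq 2$ and completing the argument. The main obstacle lies in the second step: while the existence of an irreducible covering family of $\operatorname{Exc}(\phi)$ by minimum-degree rational curves in $R_1$ is classical for smooth Mori contractions, in the singular setting one should either perform the construction on the smooth covering stack $\mX$ (applying Mori bend-and-break there, exactly as in the proofs of Propositions~\ref{2twistedpoints} and \ref{1point}) or invoke the corresponding statements for Mori contractions of Fano varieties with only quotient singularities. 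Once that covering family is in hand, everything else—the degree bound, the unsplit property, and the absence of twisted points on a generic member—follows immediately from the numerical condition $i_X > \tfrac{n}{2}+1$ and the codimension of $X\sing$.
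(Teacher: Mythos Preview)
Your approach differs from the paper's: rather than invoking Lemma~\ref{divisorlemma}, the paper argues directly by dimension counts in two separate cases (divisorial and fiber type), in each case playing the contracted ray $\alpha$ against a second extremal ray $\beta$ and deriving a numerical contradiction from the bounds $\dim W_i > n/2$ furnished by Lemma~\ref{dimoflocus}.

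Your reduction to Lemma~\ref{divisorlemma} is an attractive shortcut, but there is a genuine gap in the claim that the generic member of your covering family avoids $X\sing$ ``immediately from the codimension of $X\sing$''. That implication is false in general: an irreducible covering family of a divisor can have every member passing through a fixed point (think of lines through the vertex of a cone). In the fiber-type case you are saved because distinct fibers of $\phi$ are disjoint, so no single point can lie on every curve of the family. But in the divisorial case, if $\dim \phi(D)=0$ then all curves of your family lie in the single fiber $D$, and nothing you have written rules out that they all pass through some singular point of $X$ on $D$. To exclude this you must show $\dim \phi(D)>0$. The natural argument---choose a second extremal ray $\beta$ with $\beta\cdot D>0$, build from $\beta$-curves through a fixed point a locus $W$ of dimension $>n/2$ via Lemma~\ref{dimoflocus}, intersect with $D$, and conclude that $D$ contains a curve whose class is not in $R_1$---is exactly what the paper does in its divisorial case. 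The same issue recurs when you try to bound the degree of your covering family by $n+1$: Lemma~\ref{1twisted} produces \emph{one} curve of degree $\leq n+1$ in $R_1$, not a family covering $D$; obtaining a covering family of that degree again wants a general smooth fiber of $D\to\phi(D)$, hence again $\dim\phi(D)>0$. So once you fill these gaps, your proof has absorbed the paper's direct argument rather than bypassed it.
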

\begin{proof}
Assume the contrary that $i_X > \frac{n}{2}+1$. Divide into two cases: (1) the contraction $\phi:X \to Y$ is divisorial, and (2) the contraction is of fiber type. 
\newline

First consider the divisorial case. Let $D=Exc(\phi) \subset X $  be the exceptional divisor, and $\RR _{\geq 0}\; \alpha$
be the contracted extremal ray. As in the proof of Lemma~\ref{divisorlemma}, we can find an extremal ray $\RR_{\geq 0}\; \beta$ such that $\beta \cdot D >0$. Let  $g:\mC \to \mX$ be a twisted stable map such that $\mC$ has at most one twisted point and the twisted stable map   
$g: \mC \to \mX$ is birational to its image. We may also assume $[(\pi \circ g) (\mC)]=\beta$.  Consider $g: \mC \to \mX$ as a 
twisted  $1$-pointed stable map of genus $0$  with homology class $\beta$. Let $[g] \in \mM \subset \mK_{0,1}(\mX, \beta , g|_{\infty})$ be an irreducible component. Consider its universal family of twisted $1$-pointed stable maps $G: \mU \to \mX$. Denote by $W \subset X$ the image of this family in $X$. Note that  $dim W > \frac{n}{2}$ (by Lemma~\ref{dimoflocus}).  
Take an irreducible component of $W' \subset W \cap D$. We have  $dim W' > \frac{n}{2}+ (n-1)-n=\frac{n}{2}-1$.  Note that  for any curve $C' \subset W' \subset W$, its class $[C']$ lies in the ray $\RR_{\geq 0} \; \beta$ (by Lemma~\ref{boundpicardnumber2}). 

Set $Z=\phi(D)$ (with the reduced scheme structure). Consider $\phi|_{D}: D \to \phi(D)=Z$.  
By a similar argument  (using intersection number) as in the proof of  Lemma~\ref{divisorlemma}, we 
have  $\RR _{\geq 0} \;\beta \neq  \RR_{\geq 0} \;\alpha$.  

It is easy to see that $dim \;Z >0$: If  $dim \; Z=0$, then $\RR_{\geq 0} \; \alpha = \RR_{\geq 0} \; \beta$; a contradiction.  Pick a general $y \in Z \subset Y$ and consider the fiber $X_y$. Since $X$ has only isolated singularities, we may assume that $X_y$ is 
smooth (and the morphism $\phi$ is smooth near $X_y$).
Since the exceptional divisor $D$ is covered by rational curves in the ray $\RR_{\geq 0} \alpha$, there 
is a (twisted) stable map   $h:\PP^1 \to  X_y \subset \mX$ such that $ [(\pi \circ h)(\PP^1)] \in \RR_{\geq 0} \alpha$. 
We may assume that $h: \PP^1 \to X_y \subset \mX$ is birational to its image.
  If $\PP^1 \cdot h^* (-K_X) >n+1$,  
it is easy to  see that $dim \;Mor_{[h]}(\PP^1, \mX, h|_{\{0, \infty \}})>1$. Applying bend and break, we obtain a rational curve with smaller degree. 
Note that the resulting  rational curve is still on $X_y$ and its class is a multiple of $\alpha$ (since we start with a curve whose class is in the extremal ray $\RR_{\geq 0} \; \alpha$). Continuing this bend and break process, we may assume that $h: \PP^1 \to X_y \subset \mX$  has $-K_{\mX}$-degree at most $n+1$. Hence it is unsplit. 

As before,  we can find an irreducible component $[h] \in \mM_1 \subset \mK_{0,1}(\mX, \alpha)$ and its universal family of twisted $1$-pointed stable maps $H: \mU_1 \to \mX$. Denote by $W_1$ its image in $X$. Note that 
$W_1 \subset X_y$.   Therefore, we have 
$dim X_y \geq dim W_1 > \frac{n}{2}$.  Also note that $\phi |_{W'}: W'  \to Z$ is finite to its image. Therefore, we have $dim Z  \geq  dim W'$. It follows that
$dim D =dim X_y+ dim Z> \frac{n}{2}+ \frac{n}{2}-1=n-1$. This gives a contradiction. \newline

Now consider the fiber type contraction case. The proof is quite similar to the proof of the fiber type case in Lemma~\ref{divisorlemma}. 
Let $\RR_{\geq 0} \beta$ be an extremal ray not contracted by the morphism $\phi$. We can find a twisted $1$-pointed  stable map $h: \mC \to \mX$ of genus $0$ and  homology class
$\beta$. Let $[h] \in \mM_2 \subset \mK_{0,1}(\mX, \beta, h|_{\infty})$ be an irreducible component and $H: \mU_2 \to \mX$ its universal family of twisted $1$-pointed stable maps. Denote by  $W_2$ the image of $\mU_2$  in $X$. Let $y \in Y$ be a general point and $X_y$ be the fiber. 
As in the proof of Lemma~\ref{divisorlemma}, 
we have $dim X_y > \frac{n}{2}$ and  $dim Y = dim X -dim X_y < \frac{n}{2}$. 
Note that $dim W_2 > \frac{n}{2}$ (Lemma~\ref{dimoflocus})  and the morphism $\phi |_{W_{2}}: W_2 \to Y$ is finite to its image. Hence $dim Y \geq dim W_2 > \frac{n}{2}$. This gives a contradiction.

\end{proof}


\begin{thebibliography}{HLOY02}
\bibitem[ACV03]{acv} D. Abramovich, A. Corti, and A. Vistoli, \emph{Twisted bundles and admissible covers}, Comm.\ Algebra 31 (2003) 3547--3618.

\bibitem[AV02]{av02} D. Abramovich, A. Vistoli. \emph{Compactifying the space of stable maps}, J. Amer. Math. Soc. 15, no. 1 27-75, 2002.
\bibitem[ACO04]{aco03}  M.  Andreatta, E.  Chierici, and G. Occhetta. \emph{Generalized Mukai Conjecture for Special Fano Varieties}, 
Cent. Eur. J. Math. 2 (2004), no. 2, 272-293, 2004.
\bibitem[BCDD03]{bcdd02} L. Bonavero, C. Casagrande, O. Debarre and S. Druel. 
\emph{Sur une conjecture de Mukai}, Comment. Math. Helv. 78 (2003), no. 3, 601-626, 2003.

\bibitem[CMSB02]{cmsb02} K. Cho, Y. Miyaoka and  N.I. Shepherd-Barron.
\emph{ Characterizations of projective spaces and applications to complex symplectic manifolds}, Higher dimensional birational geometry (Kyoto, 1997),  1--88, Adv. Stud. Pure Math., 35, Math. Soc. Japan, Tokyo, 2002.
\bibitem[Ch06]{ch06} J. -C. Chen. \emph{Characterizing  projective spaces for varieties with at most quotient singularities}, Preprint 
math.AG/0604522, 2006.
\bibitem[CT09]{ct05} J.- C. Chen and H.- H. Tseng.
 \emph{Cone Theorem via Deligne-Mumford stacks}, Preprint math.AG/0505043, to appear in Math. Ann. 2009.
 \bibitem[CT07]{ct05a} J.- C. Chen and H.- H. Tseng.
 \emph{Note on Characterization of Projective spaces}, Comm. Algebra 35 (2007), no. 11, 3808--3819, 2007.
\bibitem[De01]{de} O. Debarre.   \emph{Higher-dimensional algebraic geometry}, Springer-Verlag, 2001.
\bibitem[Ka91]{ka91} Y. Kawamata, \emph{On the length of an extremal rational curve}, Invent. Math. 105 (1991), no. 3, 609--611.
\bibitem[Ke01]{ke01} S. Kebekus.   \emph{Characterizing the projective space after Cho, Miyaoka and Shepherd-Barron}, Complex Geometry (G\"ottingen, 2000)147-155, Springer,  Berlin, 2002.
\bibitem[Ko96]{ko96} J. Koll\'ar. \emph{Rational curves on algebraic varieties}, Springer-Verlag, 1996.

\bibitem[KM98]{km98} J. Koll\'ar, S. Mori. \emph{Birational Geometry of Algebraic Varieties}, Cambridge Tracts in Mathematics 134, Cambridge University Press, 1998.
\bibitem[Mu88]{mu88} S. Mukai.  \emph{Open problems}, In The Taniguchi foundation, editor, Birational geometry of alge-
braic varieties., Katata, 1988.

\bibitem[Wi90]{wi90} J. A. Wi\'sniewski. \emph{On a conjecture of Mukai},  Manuscripta Math. 68 (1990), 135–141.
  
\end{thebibliography}
\end{document}